\newtheorem{thm}{Theorem}[section]
\newtheorem{lem}[thm]{Lemma}
\theoremstyle{definition}
\newcommand{\scr}[1]{\mathscr #1}
\definecolor{wco}{rgb}{0.5,0.2,0.3}
\numberwithin{equation}{section} \theoremstyle{remark}
\def\R{\mathbb R}  \def\ff{\frac} \def\ss{\sqrt} \def\B{\mathbf
B}
\def\dd{\delta}  \def\vv{\varepsilon} 
\def\<{\langle} \def\>{\rangle}  \def\gg{\gamma}
\def\d{\text{\rm{d}}}   
  \def\si{\sigma} 
\def\beg{\begin} \def\beq{\begin{equation}}
\def\e{\text{\rm{e}}}  \def\OO{\Omega}  \def\oo{\omega}
\def\tt{\tilde}
\def\C{\scr C}      
 \def\ll{\lambda}
\def\E{\mathbb E} 
\def\Q{\mathbb Q}  \def\LL{\Lambda}
\def\B{\scr B}  
\def\to{\rightarrow}\def\U{\scr U}
\def\8{\infty}\def\q{\mathbb Q}\def\K{\mathbf K}
\newcommand{\hsn}[1]{\|#1\|_{\mathrm{HS}}}
\newcommand{\ra}{\rightarrow}
\newcommand{\dis}{\displaystyle}
\newcommand{\wt}{\widetilde}
\def\R{\mathbb R}
\def\C{\mathscr C}
\def\F{\mathscr F}
\def\d{\mathrm{d}}
\def\E{\mathbb E}
\def\p{\mathbb P}
\def\P{\mathbb P}
\def\q{\mathbb Q}
\title{{\bf Harnack  Inequalities for Stochastic (Functional) Differential Equations with Non-Lipschitzian Coefficients}\footnote{Supported in
 part by  Lab. Math. Com. Sys., NNSFC(11131003), FANEDD (No. 200917), SRFDP and the Fundamental Research Funds for the Central Universities.}
}
\author{
{\bf Jinghai Shao$^{a)}$ ,  Feng-Yu Wang$^{a),b)}$ and Chenggui Yuan$^{b)}$}\\
\footnotesize{$^{a)}$School of Mathematical Sciences,
Beijing Normal
University, Beijing 100875, China}\\
 \footnotesize{$^{b)}$Department of Mathematics,
Swansea University, Singleton Park, SA2 8PP, United Kingdom} 
}
\begin{document}

\maketitle
\begin{abstract}   By using coupling arguments,  Harnack type inequalities are established for a class  of stochastic (functional) differential equations  with multiplicative noises and
 non-Lipschitzian coefficients.  To construct the required couplings, two results on existence and uniqueness of solutions on an open domain are presented. \end{abstract} \noindent
 AMS subject Classification:\  60H10, 60J60, 47G20.   \\
\noindent
 Keywords:   Harnack inequality, log-Harnack inequality, stochastic (functional) differential equation,  existence and uniqueness.
 \vskip 2cm

\section{Introduction}
Consider the following stochastic differential equation (SDE):
\begin{equation}\label{1.1}
\d X(t)=\si(t,X(t))\d B(t)+b(t,X(t))\d t,
\end{equation}
where $(B(t))_{t\geq 0}$ is the $d$-dimensional Brownian motion on a
complete filtered probability space $(\Omega, (\F_t)_{t\geq 0},
\mathscr F, \P)$, $\si:[0,\infty)\times\R^d\ra \R^d\otimes \R^d$ and
$b:[0,\infty)\times\R^d\ra \R^d$ are measurable, locally bounded in
the first variable and continuous in the second variable. This
time-dependent stochastic differential equation   has intrinsic
links to non-linear PDEs (cf. \cite{TWWY}) as well as geometry with
time-dependent metric (cf. \cite{Anton}). When the equation has a
unique solution for any initial data $x$, we denote the solution by
$X^x(t)$.   In this paper we aim to investigate Harnack inequalities
for the associated family of
  Markov operators $(P(t))_{t\ge 0}$:
$$P(t) f(x):= \E f(X^x(t)),\ \ t\ge 0, x\in \R^d, f\in \B_b(\R^d),$$ where $\B_b(\R^d)$ is the set of all bounded measurable functions on $\R^d$.

In the recent work \cite{Wan3} the second named author  established  some Harnack-type  inequalities for $P(t)$ under certain ellipticity and semi-Lipschitz conditions. Precisely,
if  there exists an increasing function $K:[0,\infty)\ra \R$ such that
\[\hsn{\si(t,x)-\si(t,y)}^2+2\< b(t,x)-b(t,y),x-y\>\leq K(t)|x-y|^2,\ x,y\in\R^d,\ t\geq 0,\]
and there exists a decreasing function $\lambda:[0,\infty)\ra (0,\infty)$ such that
$$\|\si(t,x)\xi\|\ge \ll(t) |\xi|,\ \ t\ge 0, \xi,x\in \R^d,$$
then for each $T>0$, the log-Harnack inequality
\beq\label{A0} P(T)\log f(y)\leq \log P(T)f(x)+\frac{K(T)|x-y|^2}{2\lambda(T)^2(1-e^{-K(T)T})}, x,y\in \R^d\end{equation} holds for all strictly positive $f\in \B_b(\R^d).$
If, in addition, there exists an increasing function $\delta:[0,\infty)\ra (0,\infty)$ such that almost surely
\[\big|\big(\si(t,x)-\si(t,y)\big)^*(x-y)\big|\leq \delta(t)|x-y|,\ x,y\in \R^d, t\geq 0,\]
then for $p>(1+\frac{\delta(T)}{\lambda(T)})^2$ there exists a positive constant $C(T)$ (see \cite[Theorem 1.1(2)]{Wan3} for expression of this constant) such that the following Harnack inequality with power $p$ holds:
\beq\label{B0}\big(P(T)f(y)\big)^p\leq \big(P(T)f^p(x)\big)e^{C(T)|x-y|^2},\ x,y\in \R^d, \ f\in \mathscr B_b(\R^d).
\end{equation}
This type Harnack inequality  is first introduced   in \cite{Wan1}
for diffusions on Riemannian manifolds, while the log-Harnack
inequality is firstly studied in \cite{RW09, W10} for semi-linear
SPDEs and reflecting diffusion process on Riemannian manifolds
respectively. Both inequalities have been extended and applied  in
the study of various finite- and infinite-dimensional models,  see
\cite{AK,AZ, BGL, ERS, GW,LW,RW,Wan2,Wan3} and references within. In
particular, these inequalities  have been studied in \cite{WY10} for
the stochastic functional differential equations (SFDE)
\beq\label{y1.1}
 \d X(t)= \big\{Z(t,X(t))+a(t,X_t)\big\}\d t +\si(t, X(t))\d B(t),\ \ X_0\in \C,
 \end{equation}
 where    $\C=C([-r_0,0];\R^d)$ for a fixed constant $r_0>0$ is equipped with the uniform norm $\|\cdot\|_\infty$; $X_t\in \C$ is given by $X_t(u)=X(t+u), u\in [-r_0, 0];$
  $ \si:   [0,\infty)\times\R^d \to \R^d\otimes \R^d,$
$Z:  [0,\infty)\times\R^d \to \R^d,$ and
$a:  [0,\infty)\times\C \to \R^d$ are
  measurable, locally bounded in the first variable and continuous in the second variable. Let $X_t^\phi$ be the solution to this equation with $X_0=\phi\in\C$.
In   \cite{WY10}  the log-Harnack inequality of type (\ref{A0}) and the Harnack inequality of type (\ref{B0}) were established for
$$P_t F(\phi):= \E F(X_t^\phi),\ \  t>0, F\in \B_b(\C)$$ provided
     $\si$ is invertible  and for any $T>0$ there exist constants $K_1, K_2\ge 0, K_3>0$ and $K_4\in \R$ such that \beg{enumerate}
\item[$(1)$] $\big|\si(t, \eta(0))^{-1} \{a(t,\xi)-a(t,\eta)\}\big|\le K_1 \|\xi-\eta\|_\infty,\ t\in [0,T], \xi,\eta\in \C;$\item[$(2)$] $\big|(\si(t,x)-\si(t,y))\big|\le K_2 (1\land |x-y|),\  t\in [0,T], x,y\in \R^d;$
    \item[$(3)$] $\big|\si(t,x)^{-1} \big|\le K_3,\  t\ge 0, x\in\R^d;$ \item[$(4)$] $\hsn{|\si(t,x)-\si(t,y)}^2 + 2\<x-y, Z(t,x)-Z(t,y)\>\le K_4 |x-y|^2,\ t\in [0, T], x,y\in \R^d.$\end{enumerate}

\

The aim of this paper is to extend the above mentioned results  to
SDEs and SFDEs with less regular coefficients as considered in  Fang
and Zhang \cite{FZ} (see also \cite{Lan}), where    the existence
and uniqueness of solutions  were investigated. In section 2, we
consider the SDE  case; and in section 3, we consider the SFDE case.
Finally, in section 4  we present two results for  the existence and
uniqueness  of solutions on open domains of SDEs and SFDEs with
non-Lipschitz coefficients, which are crucial for constructions of
couplings in the proof of     Harnack-type inequalities.

\section{ SDE with non-Lipschitzian coefficients}
To characterize the non-Lipschitz regularity of coefficients, we introduce the class
\beq\label{N1} \U:=\bigg\{u\in C^1((0,\infty); [1,\infty)):\  \int_0^1 \ff{\d s}{su(s)}=\infty,\ \ \liminf_{r\downarrow 0} \big\{u(r)+ru'(r)\big\}>0\bigg\}.\end{equation}
Here, the restriction that $u\ge 1$ is more technical than essential, since in applications one may usually replace $u$ by $u\lor 1$ (see condition {\bf (H1)} below).

To ensure the existence and uniqueness of the solution and to
establish  the log-Harnack  inequality, we shall need the following
assumptions:
\begin{itemize}
\item[{\bf (H1)}]\ There exist $u,\tt u\in\U$ with $u'\le 0$ and increasing functions $K,\tt K\in C([0,\infty); (0,\infty))$ such that for all $t\ge 0$ and $x,y\in\R^d$,
\beg{equation*}\beg{split} &\< b(t,x)-b(t,y),x-y\>+\ff 1 2 \hsn{\si(t,x)-\si(t,y)}^2\leq K(t) |x-y|^2u(|x-y|^2)\\
&\hsn{\si(t,x)-\si(t,y)}^2\leq \tt K(t) |x-y|^2\tt
u(|x-y|^2).\end{split}\end{equation*}
\item[{\bf (H2)}]\ There exists a decreasing function $\ll\in C([0,\infty); (0,\infty))$ such that
$$|\si(t,x) y|\ge \ll(t) | y |,\ \ t\ge 0, x, y\in \R^d.$$
\end{itemize}

The log-Harnack inequality we are establishing depends only on
functions $u, K$ and $\ll$,   $\tt K$ and $\tt u$ will be only used
to ensure  the existence of coupling  constructed in the proof. As
in \cite{Wan3}, in order to derive the Harnack inequality with a
power, we need the following additional assumption:

\ \newline {\bf (H3)} There exists an increasing function $\dd\in C([0,\infty); [0,\infty))$ such that
$$|(\si(t,x)-\si(t,y))^*(x-y)|\le \dd(t) |x-y|,\ \ x,y\in \R^d, t>0.$$

\

\begin{thm}\label{thm1} Assume  that {\bf (H1)}  holds. \beg{itemize}
\item[$(1)$] For any initial data $X(0)$, the equation $(\ref{1.1})$ has a unique  solution, and the solution is non-explosive.
\item[$(2)$]  If moreover {\bf (H2)} holds
and
\beq\label{U}\varphi(s):= \int_0^s u(r)\d r\le \gg s u(s)^2,\ \ s >0\end{equation} for some constant $\gg>0$, then
for each $T>0$ and strictly positive $f\in \B_b(\R^d)$,
$$P(T)\log f(y)\leq \log P(T) f(x)+\frac{K(T)\varphi(|x-y|^2)}{\ll(T)(1-\exp[-2K(T) T/\gg])},\quad f\geq 1,\ x,y\in \R^d.
$$
\item[$(3)$] If, additional to conditions in $(2)$,
 {\bf (H3)} holds, then
$$
\big(P(T)f(y)\big)^q\leq P(T)f^q(x)\cdot\exp\bigg[\frac{K(T) \sqrt q(\sqrt q-1)\varphi(|x-y|^2)}{2 \dd(T)\big((\sqrt{q}-1)\ll(T)- \dd(T)\big)\big(1-\exp[-2K(T) T/\gg]\big)}\bigg]
$$holds for $T>0$, for $q> 1+\frac{\dd(T)+2\ll(T)\sqrt{\dd(T)}}{\ll(T)^2}$, $x,\,y\in \R^d$, and $f\in \mathscr B_b^+(\R^d)$, the set of all non-negative elements in $\B_b(\R^d)$.\end{itemize}
\end{thm}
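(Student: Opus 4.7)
For part $(1)$, I would reduce both pathwise uniqueness and non-explosion to the open-domain existence/uniqueness statements developed in Section~4. Uniqueness is a Yamada--Watanabe argument adapted to the monotonicity bound: if $X_1, X_2$ are two solutions from the same initial data and $Z = X_1 - X_2$, then It\^o's formula together with \textbf{(H1)} gives $\d|Z(t)|^2\le 2K(t)|Z(t)|^2 u(|Z(t)|^2)\d t + \d M_t$ for a local martingale $M$, and the defining divergence $\int_0^1 \ff{\d s}{s u(s)} = \8$ of the class $\U$ forces $Z\equiv 0$. Non-explosion follows by the same Osgood-type reasoning applied to the Lyapunov function $V(x)=\log(1+|x|^2)$, after using \textbf{(H1)} with $y=0$ together with the local boundedness of $b(t,0)$ and $\si(t,0)$ to control the drift and diffusion of $V(X(t))$.

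For $(2)$ and $(3)$ the strategy is the coupling-by-change-of-measure method. Fix $x\ne y$ and $T>0$, let $X(t)$ solve (\ref{1.1}) from $x$ on $(\OO,\F,\P)$, and introduce an auxiliary process $Y$ with $Y(0)=y$ of the form
\[
\d Y(t) = b(t,Y(t))\d t + \si(t,Y(t))\d B(t) + v(t)\d t,
\]
where the coupling drift $v(t)$ is built out of $u(|X-Y|^2)$, $X-Y$ and a time-dependent scalar weight $\eta(t)$ tailored to condition (\ref{U}); $Y$ is run up to the coupling time $\tau:=\inf\{t\ge 0 : X(t)=Y(t)\}$, after which we set $Y\equiv X$. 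Writing $Z = X-Y$ and applying It\^o's formula to $\varphi(|Z|^2)$, the hypothesis $u'\le 0$ kills the second-order correction while \textbf{(H1)} gives a bound of the form
\[
\d\varphi(|Z(t)|^2) \le \d(\mathrm{mart.}) + 2\big(K(t)-\eta(t)\big)\,u(|Z(t)|^2)^2\,|Z(t)|^2\,\d t.
\]
Condition (\ref{U}) rewritten as $u(s)^2 s \ge \varphi(s)/\gg$ turns this into a linear Gronwall inequality for $\E\varphi(|Z(t)|^2)$ once $\eta(t)\ge K(t)$, and I would calibrate $\eta$ (an explicit function with an appropriate singularity at $t=T$) so that the integrated exponent produces the factor $1-\exp[-2K(T)T/\gg]$ and drives $\E\varphi(|Z(T)|^2) = 0$, forcing $\tau\le T$ a.s.

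Making this rigorous requires $Y$ to exist as a strong solution up to $\tau$ despite both the coefficients and the coupling drift being only non-Lipschitz, with the latter singular on $\{X=Y\}$; this is the main technical obstacle. I plan to interpret $(X,Y)$ as a solution of a coupled SDE on the open domain $\{(x',y'): x'\ne y'\}$ and to invoke the corresponding open-domain existence result of Section~4, the continuity bound on $\si$ needed there being supplied by the auxiliary functions $\tt K$ and $\tt u$ of \textbf{(H1)}. Once $Y$ is built, Girsanov's theorem yields $\d\q = R\,\d\P$ with
\[
R = \exp\bigg(-\int_0^T \la \si(s,Y(s))^{-1}v(s),\d B(s)\raa - \ff12\int_0^T |\si(s,Y(s))^{-1}v(s)|^2\d s\bigg),
\]
under which $Y$ solves (\ref{1.1}) from $y$. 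Because $Y(T) = X(T)$, one has $P(T)\log f(y) = \E[R\log f(X(T))]$; for $(2)$, Young's inequality gives $P(T)\log f(y)\le \log P(T)f(x) + \E[R\log R]$, and combining $\E[R\log R] = \ff12\E_\q\int_0^T |\si^{-1}v|^2\d s$ with \textbf{(H2)} and the decay estimate just derived for $\E\varphi(|Z(t)|^2)$ yields the announced log-Harnack constant.

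For $(3)$, the same coupling is used with H\"older's inequality in place of Young's: $(P(T)f(y))^q \le (\E R^{q/(q-1)})^{q-1}\,P(T)f^q(x)$. Estimating $\E R^{q/(q-1)}$ requires rewriting it as the expectation of a genuine exponential martingale times an explicit integrable factor; the extra quadratic variation that appears is $\int_0^T |(\si(s,X)-\si(s,Y))^*(X-Y)|^2\d s$, which is precisely where \textbf{(H3)} enters, and the stated lower bound on $q$ is exactly the Novikov-type integrability threshold produced by the resulting exponential. Throughout, the central difficulty remains the open-domain construction of the coupling $Y$; once that is secured, the remaining estimates are a systematic upgrade of the Lipschitz arguments to the non-Lipschitz modulus $u$ via condition (\ref{U}).
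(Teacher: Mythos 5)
Your proposal follows essentially the same route as the paper: part (1) via the open-domain existence/uniqueness theorem of Section 4 plus a linear-growth/Osgood bound from \textbf{(H1)} with $y=0$; parts (2)--(3) via the coupling by change of measure with a singular time weight, It\^o's formula applied to $\varphi(|Z|^2)$ using $u'\le 0$ and (\ref{U}), the open-domain result on $\{x'\ne y'\}$ (with $\tt K,\tt u$ supplying the needed control on $\si$) to justify the coupling, and then Girsanov with Young's inequality for the log-Harnack and H\"older plus an exponential-martingale estimate under \textbf{(H3)} for the power-Harnack inequality. The plan is correct and all the key ingredients match the paper's proof.
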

Typical examples for $u\in \U$ satisfying $u'\le 0$ and (\ref{U}) contain $u(s)=  \log (\e\lor s^{-1}), u(s)= \{\log (\e\lor s^{-1}) \}\log\log (\e^\e\lor s^{-1}), \cdots.$

  Although the main idea of the proof is based on   \cite{Wan3},  due to the non-Lipschitzian coefficients we have to overcome additional difficulties for the construction of coupling.
In fact, to show that the coupling we are going to construct is well defined, a new result concerning existence and uniqueness of solutions to SDEs on a domain is addressed in section 4.

\subsection{Construction of the coupling and some estimates}

It is easy to see fromm Theorem \ref{T3.1} that the equation
(\ref{1.1}) has a unique strong solution which is non-explosive (see
 the beginning of the next subsection).
To establish the desired log-Harnack inequality, we modify the  coupling constructed in  \cite{Wan3}.
For fixed  $T>0$ and   $\theta\in (0,2)$, let
\[\xi(t)=\frac{2-\theta}{2K(T)}\Big[1-\e^{\frac{2K(T)}{\gg}(t-T)}\Big], \quad t\in [0,T], \]
then $\xi$ is a smooth and  strictly positive on $[0,T)$ so that
\beq\label{UV} 2-2K(T) \xi(t)+\gg \xi'(t)=\theta, \ t\in
[0,T).\end{equation} For any $x,y\in \R^d$, we construct the
coupling processes $(X(t),Y(t))_{t\ge 0}$ as follows:
\begin{equation}\label{2.3}\beg{cases}
\d X(t)\!=\!\si(t,X(t))\d B(t)+b(t,X(t))\d t,\ \  X_0=x,\\
\d Y(t)\!=\!\si(t,Y(t))\d B(t)\!+\!b(t,Y(t))\d t\!\\
\qquad\quad \  + \frac 1 {\xi(t)}
\si(t,Y(t))\si(t,X(t))^{-1}(X(t)\!-\!Y(t)) u(|X(t)\!-\!Y(t)|^2)\d t,
\ \  Y_0=y.
\end{cases}
\end{equation}
We intend to show that the $Y(t)$ (hence, the coupling process) is well defined up to time $\tau$ and $\tau\le T$,  where
$$\tau:=\inf\{t\ge 0: X(t)=Y(t)\}$$ is the coupling time. To this end, we apply Theorem \ref{T3.1} to
$$D= \{(x',y')\in \R^d\times\R^d:\ x'\ne y'\}.$$ It is easy to verify (\ref{3.2}) from {\bf (H1)}. Then $Y(t)$ is well defined up to time
$\zeta\land\tau$, where
$$\zeta:=  \lim_{n\ra \infty}\zeta_n,\ \text{and}\ \zeta_n:=\inf\{t\in [0,T);\ |Y(t)|\geq n\}.$$ Here and in what follows,  we set $\inf\emptyset =\infty.$

As in \cite{Wan3}, to derive  Harnack-type inequalities, we need to prove  that the coupling is successful before  $\zeta\land T$ under the weighted probability
$\Q:= R(T\!\land\!\tau\!\land\!\zeta)\P$, where
\begin{equation}\label{2.4}
\begin{split}
R(s):=\!\exp\bigg[&\!-\!\int_0^s\!\ff 1 {\xi(t)}\big\<\si(t,X(t))^{-1}(X(t)-Y(t))u(|X(t)-Y(t)|^2),\d B(t)\big\>\\
&-\frac 12\int_0^s\ff 1
{\xi(t)^2}|\si(t,X(t))^{-1}(X(t)-Y(t))|^2u^2(|X(t)-Y(t)|^2)\d
t\bigg],
\end{split}
\end{equation} for $s\in [0,T\!\land\!\zeta\!\land\! \tau)$.
To ensure the existence of the density $R(T\!\land\!\tau\!\land\!\zeta)$ , letting
$$\tau_n=\inf\{t\in [0,\zeta): |X(t)-Y(t)|\ge n^{-1}\},\ \ \ n\ge 1,$$   we  verify that  $(R(s\!\land\! \zeta_n\!\land\!\tau_n))_{s\in [0,T),n\ge 1}$ is uniformly integrable, so that
$$R({T\land \tau\land\zeta}):=
\lim_{n\to\infty} R({(T-n^{-1})\land\tau_n\land\zeta_n})$$ is a well defined probability density due to the martingale convergence theorem.  Then we prove that  $\zeta\land T\ge\tau$ a.s.-$\Q$, so that $\Q= R({\tau})\P$. Both assertions  are ensured by the following lemma.

\begin{lem}\label{lem1}
Assume that the conditions {\bf (H1)} and {\bf (H2)} hold for some
$u$ satisfying $(\ref{U})$. Then\beg{enumerate}
\item[$(1)$] For any $s\in [0,T)$ and $n\ge 1$,
$$ \E [R({s\wedge \tau_n\land\zeta_n})\log R({s\wedge \tau_n\land\zeta_n})]\leq \frac{K(T)\varphi(|x-y|^2)}{\ll(T)^2\,\theta(2-\theta)(1-\exp[-2K(T) T/\gg])}.$$
Consequently, $R({T\wedge \zeta\land \tau}):=\lim_{n\to\infty} R({(T-n^{-1})\wedge \tau_n\land\zeta_n})$ exists as a probability density function of $\P$, and
$$\E\big\{R({T\wedge \zeta\land \tau})\log R({T\wedge \zeta\land \tau})\big\}\le  \frac{K(T)\varphi(|x-y|^2)}{\ll(T)^2\,\theta(2-\theta)(1-\exp[-2K(T) T/\gg])}.   $$
\item[$(2)$]   Let $\mathbb Q=R({T\wedge \zeta\land\tau}) \P$, then $\mathbb Q(\zeta\land T\ge \tau)=1$. Thus, $\mathbb Q=R({\tau}) \P$ and
$$\E\big\{R({\tau})\log R({\tau})\big\}\le  \frac{K(T)\varphi(|x-y|^2)}{\ll(T)^2\,\theta(2-\theta)(1-\exp[-2K(T) T/\gg])}.   $$
\end{enumerate}
\end{lem}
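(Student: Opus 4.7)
The plan is to work under a Girsanov-transformed measure, get a Bismut--Elworthy-type Itô estimate for $\varphi(|X-Y|^2)/\xi(t)$ whose dissipative coefficient is exactly tuned by (\ref{UV}), and then translate this into an entropy bound for $R$. For part (1), I would fix $s<T$ and $n\ge 1$ and first observe that on $[0,s\wedge\tau_n\wedge\zeta_n]$ the integrand in the definition \eqref{2.4} of $R$ is bounded: the condition $|X-Y|\ge 1/n$ combined with $u'\le 0$ yields $u(|X-Y|^2)\le u(n^{-2})$, the stopping by $\zeta_n$ bounds $|Y|$, and {\bf (H2)} gives $\|\si(t,X)^{-1}\|\le 1/\ll(T)$ for $t\le T$. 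Novikov therefore applies, $\Q_n:=R(s\wedge\tau_n\wedge\zeta_n)\P$ is a probability measure, and Girsanov yields the $\Q_n$-Brownian motion
$$\tt B(t)=B(t)+\int_0^{t\wedge\tau_n\wedge\zeta_n}\!\ff{1}{\xi(r)}\si(r,X(r))^{-1}(X(r)-Y(r))u(|X(r)-Y(r)|^2)\d r,$$
under which $Y$ solves the original SDE $\d Y=\si(t,Y)\d \tt B+b(t,Y)\d t$ on the stopping interval.

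Next I would apply Itô to $|X-Y|^2$: the coupling drift contributes $-2\xi(t)^{-1}|X-Y|^2 u(|X-Y|^2)$, and {\bf (H1)} bounds the remaining deterministic part by $2K(T)|X-Y|^2 u(|X-Y|^2)$. Since $\varphi'=u$ and $\varphi''=u'\le 0$, the Itô correction for $\varphi(|X-Y|^2)$ is non-positive, so
$$\d\varphi(|X-Y|^2)\le \d M_t+2\big(K(T)-\xi(t)^{-1}\big)|X-Y|^2 u^2(|X-Y|^2)\d t,$$
for a local martingale $M$. Combining this with $\d(\varphi/\xi)=\xi^{-1}\d\varphi-\xi^{-2}\xi'\varphi\,\d t$, using (\ref{U}) (so that $-\xi'\varphi\le -\gg\xi'|X-Y|^2 u^2$, and note $\xi'<0$), and finally invoking (\ref{UV}) to get $2K(T)\xi(t)-2-\gg\xi'(t)=-\theta$, gives the key inequality
$$\d\!\left(\ff{\varphi(|X-Y|^2)}{\xi(t)}\right)\le \d N_t-\theta\,\ff{|X-Y|^2 u^2(|X-Y|^2)}{\xi(t)^2}\d t,$$
with $N$ a local martingale. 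Taking $\E_{\Q_n}$ and integrating to $s\wedge\tau_n\wedge\zeta_n$ yields
$$\theta\,\E_{\Q_n}\!\!\int_0^{s\wedge\tau_n\wedge\zeta_n}\!\ff{|X-Y|^2 u^2(|X-Y|^2)}{\xi(t)^2}\d t\le \ff{\varphi(|x-y|^2)}{\xi(0)}.$$
Rewriting $\log R$ with $\tt B$ gives $\E[R\log R]=\E_{\Q_n}[\log R]=\tfrac12 \E_{\Q_n}\!\int \xi(t)^{-2}|\si^{-1}(X-Y)|^2u^2\d t$; bounding $|\si^{-1}|\le 1/\ll(T)$ via {\bf (H2)} and substituting the explicit value of $\xi(0)$ produces exactly the stated constant $K(T)/[\ll(T)^2\theta(2-\theta)(1-\exp[-2K(T)T/\gg])]$. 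The uniform entropy bound in $n,s$ implies uniform integrability of the family, so martingale convergence produces $R(T\wedge\tau\wedge\zeta)$ as an $\P$-density with the bound preserved by Fatou.

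For part (2), under $\Q=R(T\wedge\tau\wedge\zeta)\P$ the process $Y$ solves the original SDE \eqref{1.1} driven by $\tt B$ on $[0,\tau)$, so Theorem \ref{thm1}(1) (equivalently the general existence-uniqueness theorem of \S4) forbids explosion and $\zeta\ge\tau\wedge T$ holds $\Q$-a.s. Passing the differential inequality above to the limit and dropping the non-negative integral term gives $\E_\Q[\varphi(|X(t)-Y(t)|^2)]\le \xi(t)\varphi(|x-y|^2)/\xi(0)$ for every $t<T$; since $\xi(t)\downarrow 0$ as $t\uparrow T$, $\varphi(|X(t)-Y(t)|^2)\to 0$ in $L^1(\Q)$, hence in $\Q$-probability, and by path continuity together with $\varphi(r)>0$ for $r>0$, $\tau\le T$ $\Q$-a.s. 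Consequently $T\wedge\tau\wedge\zeta=\tau$ $\Q$-a.s., and the entropy estimate from (1) transfers verbatim to $\E[R(\tau)\log R(\tau)]$.

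The main obstacle is not the global strategy, which is dictated by \cite{Wan3}, but the delicate algebraic matching in the Itô estimate: the terms $-2/\xi$ from the coupling drift, $-\xi'/\xi^2$ from rescaling, and the inequality $\varphi\le\gg s u^2$ from (\ref{U}) must combine precisely to produce a dissipative coefficient, and this is what the choice (\ref{UV}) of $\xi$ is tailored for. The ancillary technical hurdle is justifying that the local martingales have mean zero and that all limits $n\to\8$, $s\uparrow T$ can be interchanged with the (quasi-)concave functional $r\log r$; both are handled by the two-stage localization $\tau_n\wedge\zeta_n$ together with the uniform entropy bound and the non-explosion guaranteed by Theorem \ref{thm1}(1).
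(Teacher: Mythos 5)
Your argument for part (1) is essentially the paper's: the same Girsanov change of measure on $[0,s\wedge\tau_n\wedge\zeta_n]$, the same It\^o estimate for $\varphi(|Z(t)|^2)/\xi(t)$ using $\varphi''=u'\le 0$, the combination of $(\ref{U})$ with $(\ref{UV})$ to produce the dissipative coefficient $-\theta$, the resulting bound $(\ref{2.8})$ on $\E_{\Q_{n,s}}\int \xi(t)^{-2}|Z(t)|^2u^2(|Z(t)|^2)\,\d t$, and the identification of the relative entropy with half that integral via {\bf (H2)}; the passage to the limit by uniform integrability and martingale convergence is also identical. For part (2) you take a genuinely different route on both sub-claims. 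To rule out explosion of $Y$ before $T\wedge\tau$ under $\Q$, you invoke the Girsanov representation of $Y$ as a solution of $(\ref{1.1})$ driven by $\widetilde B$ together with Theorem \ref{thm1}(1); the paper instead proves the quantitative estimate $(\ref{2.10})$, comparing the exit times of $X$ and $Y$ from balls of radii $m<n$ so that $\varphi(|Z|^2)\ge (n-m)^2$ on the bad event, and lets $n,m\to\infty$. To show $\Q(\tau\le T)=1$, you use $\E_\Q[\varphi(|Z(t)|^2);\tau>t]\le \xi(t)\varphi(|x-y|^2)/\xi(0)\to 0$ as $t\uparrow T$ and path continuity; the paper instead argues from $\int_0^T\xi(t)^{-2}\d t=\infty$ that $\tau>T$ would force the (a.s.\ finite) integral $\int_0^{T\wedge\tau}\xi(t)^{-2}|Z(t)|^2u^2(|Z(t)|^2)\d t$ to diverge. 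Both of your alternatives are correct and exploit the same two ingredients (the supermartingale property of $\varphi(|Z|^2)/\xi$ and the degeneration of $\xi$ at $T$); yours are arguably cleaner, while the paper's versions are more self-contained in that they do not presuppose the global Girsanov construction that is only carried out in Subsection 2.2. One caveat: your justification of Novikov's condition via boundedness of the integrand is incomplete, since $\zeta_n$ bounds $|Y|$ but neither $|X|$ nor $|Z|$ is bounded by the stopping times $\tau_n\wedge\zeta_n$ (with $\tau_n$ read, as you correctly do, as the first time $|X-Y|$ drops to $n^{-1}$); an additional localization by the exit times of $X$, harmless because $X$ is non-explosive under $\P$, is needed to make this rigorous --- a point the paper also passes over in silence.
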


\begin{proof} (1) Let \begin{equation}\label{2.5}
 \widetilde{B}(t)= B(t)+\int_0^t \!\frac 1 {\xi(s)} \si(s,X(s))^{-1}(X(s)\!-\!Y(s)) u(|X(s)-Y(s)|^2)\d s,\quad \  t<T\land \tau\wedge \zeta.
\end{equation}  Then, before time $T\land\tau\land\zeta$, (\ref{2.3})  can be reformulated as
\begin{equation}\label{FF}
\begin{cases}
\d X(t)=\si(t,X(t))\d \wt B(t)+b(t,X(t))\d t-\frac{X(t)-Y(t)}{\xi(t)}u(|X(t)\!-\!Y(t)|^2)\d t,\ X_0=x,\\
\d Y(t)=\si(t,Y(t))\d \wt B(t)+b(t,Y(t))\d t,\ Y_0=y.
\end{cases}
\end{equation} For fixed $s\in [0,T)$ and $n\ge 1$, let   $\vartheta_{n,s}=s\land\tau_n\land\zeta_n$ and $ \Q_{n,s}= R({\vartheta_{n,s}})\P$. Then by the Girsanov theorem, $(\tt B(t))_{t\in [0,\vartheta_{n,s}]}$ is a $d$-dimensional Brownian motion under the probability measure $\Q_{n,s}$. Let $Z(t)=X(t)-Y(t)$.
By the It\^o formula and condition {\bf (H1)}, we obtain
\begin{align*}
\d |Z(t)|^2&=2\Big\< Z(t), b(t,X(t))\!-\!b(t,Y(t))\!-\!\frac{Z(t)u(|Z(t)|^2)}{\xi(t)}\Big\> \d t\!+\!\hsn{\si(t,X(t))\!-\!\si(t,Y(t))}^2\d t\\
&\hspace{1cm}+2\big\< Z(t), (\si(t,X(t))\!-\!\si(t,Y(t)))\d \wt B(t)\big\>\\
&\leq 2\Big(K(T)\!-\ff 1 {\xi(t)}\Big)|Z(t)|^2u(|Z(t)|^2)\d t\\
&\quad +2\big\< Z(t), (\si(t,X(t))\!-\!\si(t,Y(t)))\d \wt
B(t)\big\>,\ \ t\le \vartheta_{n,s}.
\end{align*}
Applying the It\^o formula to $\varphi(|Z(t)|^2)$ and noting that $\varphi''=u'\leq 0$, we derive
\begin{align*}
\d \varphi(|Z(t)|^2)&\leq \d M(t)+ 2\Big(K(T)-\ff 1 {\xi(t)}\Big)|Z(t)|^2u^2(|Z(t)|^2)\d t,\ \  t\le \vartheta_{n,s},
\end{align*} where
$$ M(t):= \int_0^t 2u(|Z_s|^2)\< Z(s), (\si(s,X(s))-\si(s,Y(s)))\d \wt B(s)\>,\ \ t\le\vartheta_{n,s}$$
is a $\Q_{n,s}$-martingale. Thus, by (\ref{U}) and (\ref{UV}),
\begin{equation}\label{2.7}
\begin{split}
\d \frac{\varphi(|Z(t)|^2)}{\xi(t)}&\leq \ff 1 {\xi(t)}\d M(t)+\frac{2K(T)\xi(t)-2}{\xi(t)^2}|Z(t)|^2u^2(|Z(t)|^2)\d t-\frac{\xi'(t)}{\xi(t)^2}\varphi(|Z(t)|^2)\d t\\
   &\leq \ff 1 {\xi(t)}\d M(t)+\frac{|Z(t)|^2u^2(|Z(t)|^2)}{\xi(t)^2}(-2+2K(T) \xi(t)-\gg \xi'(t))\d  t\\
   &=\ff 1 {\xi(t)}\d M(t)-\theta\frac{|Z(t)|^2u^2(|Z(t)|^2)}{\xi(t)^2}\d  t, \ \ t\le \vartheta_{n,s}.
\end{split}
\end{equation}
 Taking the expectation  w.r.t. the probability measure $\Q_{n,s}$ and noting
$(\wt B(t))_{t\in [0,\vartheta_{n,s}]}$ is a Brownian motion under  $\Q_{n,s}$, we get
\begin{equation}\label{2.8}
\E_{\Q_{n,s}}\bigg[\int_0^{\vartheta_{n,s}}\frac{|Z(t)|^2u^2(|Z(t)|^2)}{\xi(t)^2}\d t\bigg]\leq \frac{\varphi(|x-y|^2)}{\theta\,\xi(0)}.
\end{equation}
On the other hand,  it follows from  {\bf (H2)} that
  \begin{align*}
\log R({\vartheta_{n,s}})&\!=\!-\!\int_0^{\vartheta_{n,s}}\ff 1 {\xi(t)}\< \si(t,X(t))^{-1}Z(t)u(|Z(t)|^2), \d \wt B(t)\>\\\
&\quad +\!\frac 12 \int_0^{\vartheta_{n,s}}\!\! \ff{ \big|\si(t,X(t))^{\!-1}Z(t)\big|^2u^2(|Z(t)|^2)}{\xi(t)^2}\d t\\
&\leq\! -\!\int_0^{\vartheta_{n,s}} \ff 1 {\xi(t)}\< \si(t,X(t))^{-1}Z(t)u(|Z(t)|^2), \d \wt B(t)\>\\
&\quad +\frac 1{2\ll(T)^2 }\int_0^{\vartheta_{n,s}} \frac{|Z(t)|^2u^2(|Z(t)|^2)}{\xi(t)^2}\d t.
\end{align*}
Combining with (\ref{2.8}), we arrive at
\begin{equation}\label{2.9}
\E\big [R({\vartheta_{n,s}})\log R({\vartheta_{n,s}})\big]=\E_{\Q_{n,s}}\big[\log R({\vartheta_{n,s}})\big]\leq \frac{\varphi(|x-y|^2)}{2\ll(T)^2\theta\,\xi(0)},\ s\in [0,T),\, n\geq 1.
\end{equation} This implies the desired inequality in (1), and the consequence then follows from   the martingale convergence theorem.

(2)\ Let $\zeta_n^X=\inf\{t\geq 0;\ |X(t)|\geq n\}$. Since $X(t)$ is
non-explosive as mentioned above, $\zeta_n^X\uparrow \infty$
$\p$-a.s. and hence, also $\mathbb Q$-a.s. For $n>m>1$, it follows
from (\ref{2.7}) that
\begin{equation}\label{2.10}
\ff{\Q(\zeta_m^X>s\land\tau_m>\zeta_n)}{\xi(0)}\int_0^{(n\!-\!m)^2}\!\!\!u(s)\d s \leq \E_{\mathbb Q}\bigg[\frac{\varphi(|Z({\vartheta_{n,s}})|^2)}{\xi_{\vartheta_{n,s}}}\bigg]\leq \frac{\varphi(|x-y|^2)}{\xi(0)}.
\end{equation}
Letting first $n\ra \infty$, then $m\ra \infty$, and noting that $u\ge 1$, we obtain $\mathbb Q(\zeta< s\land\tau)=0$ for all $s\in [0,T)$. Therefore, $\mathbb Q(\zeta\ge T\land\tau)=1.$   So, it remains to show that $\Q(\tau\le T)=1$ and according to (1) and (\ref{2.8}),
$$ \E _\Q \int_0^{T\land\tau} \ff{|Z(t)|^2u(|Z(t)|^2)^2}{\xi(t)^2} \d t\le  \frac{K(T)\varphi(|x-y|^2)}{\ll(T)^2\,\theta(2-\theta)(1-\exp[-2K(T) T/\gg])}. $$
Since $\int_0^T \ff1 {\xi(t)^2}\d t=\infty,$  $\tau>T$ implies that
$$\inf_{t\in [0,T)} |Z({t\land\tau})|^2u(|Z({t\land\tau})|^2)^2>0,$$ which yields that
$$\Q(T<\tau) \le \Q\bigg(\int_0^{T\land\tau} \ff{|Z(t)|^2u(|Z(t)|^2)^2}{\xi(t)^2} \d t=\infty\bigg)=0.$$
Combining this with $\Q(\zeta\ge T\land\tau)=1$, we prove (2). \end{proof}
If moreover {\bf (H3)} holds, then we have the following moment estimate on $R(\tau)$, which will be used to prove the Harnack inequality with power.

\begin{lem}\label{lem2}
Assume {\bf (H1)}, {\bf (H2)} and {\bf (H3)} hold. Then for $p:=\frac{c^2\theta^2}{4\dd(T)^2+4\theta \ll(T)\dd(T)}>0$,
\begin{equation}\label{2.11}
\E R({\tau})^{1+p}\leq
\exp\bigg[\frac{(2\dd(T)+\ll(T)\theta)\theta \varphi(|x-y|^2)}{4\dd(T)\xi(0)(2\dd(T)+2\ll(T)\theta)}\bigg].
\end{equation}
\end{lem}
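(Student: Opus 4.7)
The plan is to construct a $\P$-supermartingale of the form $N(t):=R(t)^{1+p}\exp[\alpha\Phi(t)]$, where $\Phi(t):=\varphi(|Z(t)|^2)/\xi(t)$ is the Lyapunov quantity appearing in the proof of Lemma \ref{lem1} and $\alpha>0$ is a constant to be chosen. Since the coupling is successful at time $\tau\le T$ by Lemma \ref{lem1}(2), one has $Z(\tau)=0$, hence $\Phi(\tau)=0$ and $N(\tau)=R(\tau)^{1+p}$; meanwhile $N(0)=\exp[\alpha\varphi(|x-y|^2)/\xi(0)]$. Thus, if $N$ is shown (after suitable localization) to be a $\P$-supermartingale, then $\E R(\tau)^{1+p}=\E N(\tau)\le N(0)$ yields the bound (\ref{2.11}), once $\alpha$ is identified with the constant in its exponent.

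To verify the supermartingale property I would apply It\^o's formula to $F(r,\phi)=r^{1+p}e^{\alpha\phi}$ at $(R(t),\Phi(t))$, combining four ingredients. First, $dR=-R\langle\eta,dB\rangle$ with $\eta_t:=\sigma(t,X(t))^{-1}Z(t)u(|Z(t)|^2)/\xi(t)$, so that the drift of $dR^{1+p}$ equals $\tfrac12 p(1+p)R^{1+p}|\eta|^2\,dt$. Second, the drift of $\Phi$ under $\P$ is bounded, modulo a martingale, by $(-\theta+O(\delta/\lambda))A\,dt$ with $A:=u^2|Z|^2/\xi^2$; this is obtained by redoing the computation leading to (\ref{2.7}), and using the estimate $\langle Z,\sigma(Y)\sigma(X)^{-1}Z\rangle\ge|Z|^2-\delta|Z|^2/\lambda$ which follows from {\bf (H2)} and {\bf (H3)}. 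Third, the quadratic variation satisfies $d\langle\Phi\rangle=(4u^2/\xi^2)|(\sigma(X)-\sigma(Y))^*Z|^2\,dt\le 4\delta^2 A\,dt$ by {\bf (H3)}. Fourth, the cross variation $d\langle R^{1+p},\Phi\rangle$ introduces a term involving $\langle\eta,(\sigma(X)-\sigma(Y))^*Z\rangle$, whose absolute value is controlled (up to a constant) by $\delta A/\lambda$ via Cauchy--Schwarz together with {\bf (H2)} and {\bf (H3)}.

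Collecting these contributions and using {\bf (H2)} in the form $A\ge\lambda^2|\eta|^2$, the drift of $N/N$ is dominated by $|\eta|^2 Q(p,\alpha)$ for an explicit quadratic $Q$ in $\alpha$ with $p$-dependent coefficients. Requiring $Q(p,\alpha)\le 0$ for some $\alpha>0$ reduces, via the discriminant condition, to an inequality of the form $\theta\lambda\ge 2\delta(p+\sqrt{p(p+1)})$. Solving this at equality yields precisely the value $p=\lambda(T)^2\theta^2/[4\delta(T)(\delta(T)+\lambda(T)\theta)]$ stated in the lemma, and the corresponding double root $\alpha^\ast$ produces the constant appearing in the exponent of (\ref{2.11}).

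The main obstacle is the bookkeeping in this algebra: tracking the cross- and quadratic-variation terms precisely enough that a naive Young inequality does not cost a suboptimal constant, and then simultaneously optimizing in $(p,\alpha)$ to match the exponent in (\ref{2.11}). Once the drift bound on $N$ is in hand, promoting $N$ from a local to a true $\P$-supermartingale proceeds by the same localization at $\tau\wedge\tau_n\wedge\zeta_n$ used in the proof of Lemma \ref{lem1}, combined with Fatou's lemma and the martingale convergence theorem.
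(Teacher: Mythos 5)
Your supermartingale ansatz $N(t)=R(t)^{1+p}\e^{\alpha\varphi(|Z(t)|^2)/\xi(t)}$ is a genuinely different route from the paper, which instead argues in two steps under $\Q$: first a self-improving exponential-moment bound
$\E_{\Q}\exp[r\int_0^\cdot |Z|^2u^2(|Z|^2)\xi^{-2}\d t]\le \e^{r\varphi(|x-y|^2)/(\theta\xi(0))}\big(\E_{\Q}\exp[8\dd(T)^2r^2\theta^{-2}\int_0^\cdot\dots]\big)^{1/2}$ closed at the fixed point $r=\theta^2/(8\dd(T)^2)$, and then the H\"older-type martingale inequality $\E_\Q\e^{pM+\frac p2\<M\>}\le(\E_\Q\e^{pq(pq+1)\<M\>/(2(q-1))})^{(q-1)/q}$ optimized at $q=1+\ss{1+p^{-1}}$. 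Your one-shot argument could in principle reproduce the same constant, but as written it has a gap precisely at the point you flag as ``the main obstacle,'' and the naive version does not merely lose a constant --- it fails for the stated $p$.

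Concretely: under $\P$ the Girsanov correction to the drift of $|Z(t)|^2$ is $-\ff{2u}{\xi}\<Z,\si(Y)\si(X)^{-1}Z\> = -\ff{2u}{\xi}\big(|Z|^2-\kk\big)$ with $\kk:=\<\si(X)^{-1}Z,(\si(X)-\si(Y))^*Z\>$, so the drift of $\Phi$ is $\le -\theta A + 2\alpha^{-1}\cdot\alpha\,u^2\kk/\xi^2\cdot 1$, i.e. it carries the error $+2u^2\kk/\xi^2$; and the cross-variation term in It\^o's formula for $N$ equals $-(1+p)\alpha N\cdot 2u^2\kk/\xi^2\,\d t$, built from the \emph{same} scalar $\kk$. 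If, as in your plan, you bound these two contributions separately by their absolute values ($|\kk|\le\dd|Z|^2/\ll$), the resulting quadratic in $\alpha$ has linear coefficient $-(\theta-2(2+p)\dd/\ll)$ and the discriminant condition becomes $\theta\ll\ge 2\dd\,(2+p+\ss{p(p+1)})$. Since the lemma's $p$ is defined exactly by $\theta\ll=2\dd\,(p+\ss{p(p+1)})$, this condition fails by the margin $4\dd$, so no admissible $\alpha$ exists and the supermartingale property cannot be verified for the claimed $p$; you would only prove the lemma for a strictly smaller exponent, which weakens Theorem 2.1(3). The fix is to keep the signs: the two $\kk$-terms combine to $-2p\alpha u^2\kk/\xi^2$ (net coefficient proportional to $p$, not $2+p$), whence the quadratic is $2\dd^2\alpha^2-\alpha(\theta-2p\dd/\ll)+p(1+p)/(2\ll^2)\le 0$ and the discriminant condition is exactly $\theta\ll\ge 2\dd(p+\ss{p(p+1)})$, with double root $\alpha^*=(\theta-2p\dd/\ll)/(4\dd^2)=\theta(2\dd+\ll\theta)/(4\dd^2(2\dd+2\ll\theta))$, recovering the paper's exponent (note the paper's display (2.11) appears to have a typo, $4\dd(T)$ in place of $4\dd(T)^2$, as one sees from combining (2.12) with $(q-1)/q$). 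Your localization and passage to the limit at the end are fine once this cancellation is in place.
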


\begin{proof}
By  (\ref{2.7}) and {\bf (H3)},   for any $r>0$ we have
\begin{align*}
&\E_{\Q_{n,s}}\exp\bigg[r\int_0^{\vartheta_{n,s}}\frac{|Z(t)|^2u^2(|Z(t)|^2)}{\xi(t)^2}\d t\bigg]\\
&\leq \exp\bigg[\frac{r\varphi(|x-y|^2)}{\theta\,\xi(0)}\bigg]\E_{\Q_{n,s}}\exp\bigg[\frac{2r}{\theta}\int_0^{s\wedge \tau_n} \frac{u(|Z(t)|^2)}{\xi(t)}\< Z(t),\big(\si(t,X(t))-\si(t,Y(t))\big)\d \wt B(t)\>\bigg]\\
&\leq \exp\bigg[\frac{r\varphi(|x-y|^2)}{\theta\,\xi(0)}\bigg]\bigg(\E_{\Q_{n,s}}\exp\bigg[\frac{8\dd(T)^2r^2}{\theta^2}\int_0^{\vartheta_{n,s}}\frac{|Z(t)|^2u^2(|Z(t)|^2)}{\xi(t)^2}\d t\bigg]\bigg)^{1/2},
\end{align*}
where in the last step we use the inequality
$$\E \e^{M(t)}\leq \big(\E \e^{2\< M\>(t)}\big)^{1/2},$$
for a continuous exponentially integrable martingale $M(t)$, and $\< M\>(t)$ denotes the quadratic variational process corresponding to $M(t)$.
Putting $\dis r=\frac{\theta^2}{8\dd(T)^2}$ such that $\dis r=\frac{8 r^2 \dd(T)^2}{\theta^2}$, we  get
$$
\E_{\Q_{n,s}}\exp\bigg[\frac{\theta^2}{8\dd(T)^2}\int_0^{\vartheta_{n,s}}
\frac{|Z(t)|^2u^2(|Z(t)|^2)}{\xi(t)^2}\d t\bigg]\leq
\exp\bigg[\frac{\theta\varphi(|x-y|^2)}{4\dd(T)^2\xi(0)}\bigg].$$
Due to Lemma \ref{lem1}, we have $\tau\le T\land \zeta, \Q$-a.s.
 By  taking $s=T-n^{-1}$ and letting $n\ra \infty$  in the above inequality, we arrive at
 \begin{equation}\label{2.12}
\E_{\Q}\exp\bigg[\frac{\theta^2}{8\dd(T)^2}\int_0^{\tau} \frac{|Z(t)|^2u^2(|Z(t)|^2)}{\xi(t)^2}\d t\bigg]\leq \exp\bigg[\frac{\theta\varphi(|x-y|^2)}{4\dd(T)^2\xi(0)}\bigg].
\end{equation}
Since for any   continuous $\Q$-martingale $M(t)$
\begin{align*}
&\E_{\Q}\exp\bigg[pM(t)+\ff p 2\< M\>(t) \bigg]\\
&\leq \bigg(\E_{\Q} \exp\bigg[pqM(t)-p^2q^2\< M\>(t)/2\bigg]\bigg)^{1/q}\bigg(\E_{\Q}\exp\bigg[\frac{pq(pq+1)}{2(q-1)}\< M\>(t)\bigg]\bigg)^{(q-1)/q}\\
&\le \bigg(\E_{\Q}\exp\bigg[\frac{pq(pq+1)}{2(q-1)}\< M\>(t)\bigg]\bigg)^{(q-1)/q}, \quad q>1,
\end{align*}
we obtain from  {\bf (H2)} that
\begin{equation*}
\begin{split}
\E R({\tau})^{1+p}&=\E_{\Q} \exp\bigg[-p\int_0^{ \tau} \ff 1 {\xi(t)}\< \si(t,X(t))^{-1}Z(t)u(|Z(t)|^2),\d \wt B(t)\>\\
   &\hspace{1.9cm}+\frac p 2\int_0^{\tau}\ff 1 {\xi(t)^{2}}\big|\si(t,X(t))^{-1}Z(t)u(|Z(t)|^2)\big|^2\d
   t\bigg]\\
&\le
\bigg(\E_{\Q}\exp\bigg[\frac{pq(pq+1)}{2\ll(T)^2(q-1)}\int_0^{\tau}
\frac{|Z(t)|^2u^2(|Z(t)|^2)}{\xi(t)^2}\d
t\bigg]\bigg)^{(q-1)/q}.\end{split}\end{equation*} Taking
$q=1+\sqrt{1+p^{-1}}$ which minimizes $q(pq+1)/(q-1)$, and using the
definition of $p$, we have
$$\frac{pq(pq+1)}{2\ll(T)^2(q-1)}=\frac{(p+\sqrt{p^2+p})^2}{2\ll(T)^2}=\frac{\theta^2}{8\dd(T)^2}, \ \
  \frac{q-1}{q}=\frac{2\dd(T) +\ll(T)\theta}{2\dd(T)+2\ll(T)\theta}.$$ Combining this with   (\ref{2.12}), we complete the proof. \end{proof}

\subsection{Proof of Theorem \ref{thm1}}

  According to Theorem \ref{T3.1} below for $D=\R^d$, {\bf (H1)} implies that (\ref{1.1}) has a unique solution. Since $u$ is decreasing, the first inequality in {\bf (H1)} with $y=0$ implies that for $|x|\ge 1$,
{\small\beq\label{NE1} 2\<b(t,x),x\>\!+\! \hsn{\si(t,x)}^2 \le 2 \<b(t,0),x\> \!+\!\hsn{\si(t,0)}^2 +2 \hsn{\si(t,0)}\hsn{\si(t,x)}\! + \!K(t) |x|^2 u(1).\end{equation} }Moreover, the second inequality in {\bf (H1)} with $y=0$ implies that for $|x|\ge 1$,
\beg{equation*}\beg{split} \hsn{\si(t,x)} &\le \hsn{\si(t,0)} +\sum_{k=1}^{[|x|]} \bigg\|\si\Big(t,\ff{kx}{[|x|]}\Big)- \si\Big(t,\ff{(k-1)x}{[|x|]}\Big)\bigg\|_{\mathrm{HS}}\\
&\le \hsn{\si(t,0)} +2|x| \ss{\tt K(t)}\ss{u(1)}
\end{split}\end{equation*} where $[|x|]$ stands for the integer part of $|x|$. Combining this with (\ref{NE1}) we may find a function $h\in C([0,\infty); (0,\infty))$ such that
$$2\<b(t,x),x\>+ \hsn{\si(t,x)}^2 \le h(t) (1+|x|^2),$$ which implies the non-explosion of $X(t)$ as is well known. Thus, the proof of (1) is finished.

Next, by Lemma \ref{lem1} and the Girsanov theorem,
$$\tt B(t):= B(t)+ \int_0^{t\land \tau} \frac{\si(s,X(s))^{-1}(X(s)-Y(s))}{\xi(s)}u(|X(s)-Y(s)|^2)\d s,\ \ t\ge 0$$
is a $d$-dimensional Brownian motion under the probability measure $\Q$. Then, according to Theorem \ref{thm1}(1), the equation
\beq\label{Y} \d Y(t)= \si(t,Y(t))\d\tt B(t) + b(t,Y(t))\d t,\ \ Y(0)=y\end{equation} has a unique solution for all $t\ge 0.$ Moreover, it is easy to see that $(X(t))_{t\ge 0}$ solves the equation
\beq\label{X} \d X(t) = \si(t,X(t))\d\tt B(t) +b(t,X(t))\d t - \ff{X(t)-Y(t)}{\xi(t)}1_{\{t<\tau\}}\d t,\ \ X(0)=x.\end{equation} Thus, we have extended equation (\ref{FF}) to all $t\ge 0$, which has a global solution $(X(t),Y(t))_{t\ge 0}$ under the probability measure $\Q$, and
$$\tau:=\inf\{t\ge 0: X(t)=Y(t)\}\le T,\ \ \Q\text{-a.s.}$$   Moreover, since the equations (\ref{Y}) and (\ref{X}) coincide for $t\ge \tau$, by the uniqueness of the solution and $X(\tau)=Y(\tau)$, we conclude that $X(T)=Y(T), \Q$-a.s.

Now, by Lemma \ref{lem1}  and   the Young inequality we obtain
  \begin{align*}
P(T)\log f(y)&=\E_{\q}\big[\log f(Y(T))\big]=\E\big[R({\tau})\log f(Y(T))\big]\\
   &\leq \log \E[ f(X(T))]+\E\big[R({\tau})\log R({\tau})\big]\\
   &\leq \log P(T) f(x)+\frac{K(T)\varphi(|x-y|^2)}{\ll(T)\theta(2-\theta)\big(1-\exp[-2K(T) T/\gg]\big)}.
\end{align*}
Taking $\theta=1$, we derive the desired log-Harnack inequality.

Moreover, by the H\"older inequality,   for any $q>1$ we have
$$
\big(P(T)f(y)\big)^q=\big(\E_{\Q}\big[f(Y(T))\big]\big)^q=\big(\E\big[R_{\tau} f(X(T))\big]\big)^q
\leq \big(P(T)f^q(x)\big)\big(\E\big[R_{\tau}^{q/(q-1)}\big]\big)^{q-1}.$$

Setting $\dis q=1+\frac{4\dd(T)^2+4\theta \ll(T)\dd(T)}{\ll(T)^2\theta^2}$ such that
\begin{equation}\label{2.15}
\frac {q}{q-1}=1+p=1+\frac{\ll(T)^2\theta^2}{4\dd(T)^2+4\theta\ll(T)\dd(T)},
\end{equation}
it then follows from  Lemma \ref{lem2}  that
\[\big(P(T)f(y)\big)^q\leq P(T)f^q(x)\cdot\exp\bigg[\frac{2\dd(T)+\ll(T)\theta}{2\ll(T)^2\dd(T)\theta\,\xi(0)}\varphi(|x-y|^2)\bigg].
\]
It is easy to see that for any $q>1+\frac{\dd(T)^2+2\ll(T)\dd(T) }{\ll(T)^2}$,   (\ref{2.15}) holds for
$\dis \theta= \frac{2\dd(T)}{\ll(T)(\sqrt q-1)}$. Therefore, the desired Harnack inequality with power $q$ follows.

\section{ SFDEs with non-Lipschitzian coefficients}

For a fixed $r_0>0$, let $\C: =C([-r_0, 0]; \R^d)$ denote  all continuous functions from  $[-r_0, 0]$ to $\R^d$ endowed with the uniform norm, i.e.
\[\|\phi\|_\infty:=\max_{-r_0\leq s\leq 0}|\phi(s)|,\quad \text{for}\ \phi\in \C.\]
Let $T>r_0$ be fixed, for any $h \in C([-r_0, T]; \R^d)$  and $t\ge 0,$  let $h_t\in \C$ such that
\[ h_t(s):=h(t+s), \ s\in [-r_0,0].\]
Consider the following type of  stochastic functional differential equation
\begin{equation}\label{3.2.1}
\d X(t)=\{b(t,X(t))+a(t, X_t)\}\d t+\bar \si (t,X_t)\d B(t),\quad
X_0 \in \C,
\end{equation}
where  $a:[0,\infty)\times \C\ra \R^d$, $\bar \si: [0,\infty)\times
\C\ra \R^{d}\otimes \R^{d}$ and $b: [0,\infty)\to\R^d$ are
measurable, locally bounded in the first variable and continuous in
the second variable.

According to the proof of Theorem \ref{T4.2} below, we introduce the
following class of functions to characterize  the non-Lipschitz
regularity of the coefficients:
$$\bar{\scr U}:= \bigg\{u\in C^1((0,\infty),[1,\infty)):\ \int_0^1 \ff{\d s}{su(s)}=\infty,\ s\mapsto su(s)\ \text{is\ increasing\ and\ concave}\bigg\}.$$
According to Theorem \ref{T4.2} with $D=\R^d$, the equation
(\ref{3.2.1}) has a unique strong solution provided there exist a
locally bounded function $K: [0,\infty)\to (0,\infty)$ and $u\in
\bar{\U}$ such that \beg{equation}\label{UN}\beg{split} &
2\<b(t,\phi(0))-b(t,\psi(0))+a(t, \phi)-a(t,\psi), \phi(0)-\psi(0)\>
+\hsn{\bar\si(t, \phi)-\bar\si(t,\psi)}^2\\
&\qquad \le
K(t)\|\phi-\psi\|_\infty^2 u(\|\phi-\psi\|_\infty^2),\\
&\hsn{\bar\si(t,\phi)-\bar\si(t,\psi)}^2 \le
K(t)\|\phi-\psi\|_\infty^2u(\|\phi-\psi\|_\infty^2)\end{split}\end{equation}
holds for all $t\ge 0$ and $\phi,\psi\in\C$. Since $su(s)$ is
increasing and concave in $s$, we have $su(s)\le c(1+s)$ for some
constant $c>0.$ Therefore, it is easy to see that the above
conditions also imply the non-explosion of the solution.

Let $X_t^\phi$ be the segment solution to  (\ref{3.2.1}) for
$X_0=\phi$. We aim to establish the Harnack inequality for the
associated Markov operators $(P_t)_{t\ge 0}$:
$$  P_tf(\phi):=\E f(X_t^\phi),\ \ f\in\B_b(\C),
\phi\in\C.$$ As already known in \cite{ERS, WY10}, to establish a
Harnack inequality using coupling method, one has to assume that
$\bar\si(\cdot,\phi)$ depends only on $\phi(0)$; that is,
$\bar\si(t,\phi)=\si(t,\phi(0))$ holds for some $\si:
[0,\infty)\times \R^d\to \R^d\otimes\R^d.$ Therefore, below we will
consider the equation
\begin{equation}\label{5.1}
\d X(t)=\{b(t,X(t))\}+a(t, X_t)\}\d t+  \si (t,X(t))\d B(t),\quad
X_0 \in \C,
\end{equation} where $a:[0,\infty)\times \C\ra \R^d$, $ \si: [0,\infty)\times
\R^d\ra \R^{d}\otimes \R^{d}$ are measurable, locally bounded in the
first variable and continuous in the second variable. We shall make use of the following assumption, which is weaker than (1)-(4) introduced in the end of Section 1 since $u$ might be unbounded.

\paragraph{(A)}\  There exist $   u\in\bar{\U}$   and increasing function $K, K_1,K_2,K_3,K_4\in C([0,\infty); (0,\infty))$ such that for all $t\ge 0$,
\beg{itemize} \item[{\rm (i)}] $ \< b(t,x)-b(t,y),x-y\>+ \ff 1 2
\hsn{\si(t,x)-\si(t,y)}^2  \leq  K_1(t) |x-y|^2   u(|x-y|^2),$\
$x,y\in \R^d$;
\item[{\rm (ii)}]\ $\hsn{\si(t,x)-\si(t,y)}^2\le K(t) |x-y|^2   u(|x-y|^2),$\
$x,y\in \R^d$; \item[{\rm (iii)}]  $  |a(t,\phi)-a(t,\psi)|^2\le
K_2(t) \|\phi-\psi\|_\infty^2 u(\|\phi-\psi\|^2_\infty),$\
$\phi,\psi\in \C$;
\item[{\rm (iv)}] $\|(\si(t,x)-\si(t,y))\si(t,y)^{-1}\|^2\le
K_3(t),\ \|\si(t,x)^{-1}\|^2\le K_4(t)$,\ $x,y\in\R^d$.
\end{itemize}

\

Obviously, {\bf (A)} implies (\ref{UN}) so that  the equation
(\ref{5.1}) has a unique strong solution and the solution is
non-explosive. Let $G(s)=\int_1^s\ff 1 {ru(r)}\d r, s>0.$ It is easy
to see that $G$ is strictly increasing with full range $\R$. Let
\beg{equation*}\beg{split} &C(T, r)= G^{-1}\Big(G(2r^2)
+G\big(4\{K_1( T) +2K_2(T)K_3(T)+32 K( T)\}\big)\Big),\\
&\Phi(T,r) = C(T,r) u(C(T,r)),\ \ \ \ T
>0.\end{split}\end{equation*} Since $G(0):=\lim_{s\downarrow
0}G(s)=-\infty$, we have $C(T, 0)=0$ for any $T> 0.$  So, if
$\lim_{s\downarrow 0} su(s)=0$ then $\Phi(T,0)=0$.  The main result
in this section is the following.

\begin{thm}\label{thm41}
Assume {\bf (A)}.  If $(\ref{U})$ holds for some constant $\gg>0$,
then for $T>0$
\begin{equation*}
\begin{split}
& P_{T+r_0}\log f(\psi)- \log P_{T+r_0}f(\phi)\\
&\le K_4(T)  \Big(\ff{2\gg\varphi(|\phi(0)\!-\!\psi(0)|^2)}{T} + T
\big\{8K_1(T)^2\!+\! 8K_2(T)K_3(T)\!+\!K_2(T)\big\}\Phi(T,\|\phi\!-\!\psi\|_\infty)\Big),
\end{split}
\end{equation*}holds
for all strictly positive $f\in \B_b(\C)$ and $\phi,\, \psi\in \C$.
\end{thm}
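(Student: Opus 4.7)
The plan is to extend the coupling argument of Theorem \ref{thm1}(2) to the functional setting by splitting $[0,T+r_0]$ into a coupling phase $[0,T]$ and a coalescence phase $[T,T+r_0]$. For the first phase I would let $X$ solve \eqref{5.1} from $\phi$ and drive $Y$, started at $\psi$, by the same Brownian motion with the additional drift
\[\xi(t)^{-1}\sigma(t,Y(t))\sigma(t,X(t))^{-1}(X(t)-Y(t))\,u(|X(t)-Y(t)|^2)\]
for a smooth $\xi$ vanishing at $T$, chosen so as to produce the constant $\gg/T$ claimed in the statement; well-posedness of the coupled system up to its coupling time $\tau$ is guaranteed by Theorem \ref{T4.2} applied off the diagonal $\{X=Y\}$. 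On $[T,T+r_0]$ I would switch to a synchronous coupling in which $Y$'s functional drift is set to $a(t,X_t)$ rather than $a(t,Y_t)$; once $X(T)=Y(T)$ has occurred, pathwise uniqueness forces $X\equiv Y$ on $[T,T+r_0]$, and hence $X_{T+r_0}=Y_{T+r_0}$ almost surely.

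I would then define $\Q=R\P$ through the Girsanov exponential of $\eta=\eta_1\mathbf 1_{[0,T]}+\eta_2\mathbf 1_{[T,T+r_0]}$, with $\eta_1(t)=\xi(t)^{-1}\sigma(t,X(t))^{-1}(X(t)-Y(t))u(|X(t)-Y(t)|^2)$ and $\eta_2(t)=\sigma(t,Y(t))^{-1}\{a(t,X_t)-a(t,Y_t)\}$. Uniform integrability of the localised densities and the identity $\tau\le T$ $\Q$-a.s.\ follow as in Lemma \ref{lem1} via It\^o's formula applied to $\varphi(|X-Y|^2)/\xi(t)$, hypothesis \eqref{U}, and $\|\sigma^{-1}\|^2\le K_4(T)$ from condition (iv). Under $\Q$, $Y$ has the law of the solution to \eqref{5.1} from $\psi$, so Young's inequality reduces the assertion to
\[P_{T+r_0}\log f(\psi)-\log P_{T+r_0}f(\phi)\le \E_\Q[\log R]=\tfrac12\E_\Q\!\int_0^{T+r_0}\!|\eta(t)|^2\,\d t.\]

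The Phase 1 part of this integral is controlled by the computation of \eqref{2.7}--\eqref{2.9}, with $K_4(T)$ in place of $\ll(T)^{-2}$ and $K_1(T)$ in place of $K(T)$; a suitable choice of $\xi$ produces the summand $2\gg K_4(T)\varphi(|\phi(0)-\psi(0)|^2)/T$ together with cross terms involving $\|X_t-Y_t\|_\infty$ arising from the $a(t,\cdot)$-contribution to the $\d|X-Y|^2$ expansion that is absent in Section 2. For the Phase 2 part, conditions (iii) and (iv) directly give $|\eta_2(t)|^2\le K_4(T)K_2(T)\|X_t-Y_t\|_\infty^2u(\|X_t-Y_t\|_\infty^2)$, so the main quantitative input is the deterministic a priori bound $\sup_{0\le t\le T+r_0}|X(t)-Y(t)|^2\le C(T,\|\phi-\psi\|_\infty)$. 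Since under $\Q$ both $X$ and $Y$ obey the true SFDE \eqref{5.1}, It\^o's formula for $|X-Y|^2$ combined with (i)--(iii) yields an integral inequality whose right-hand side involves $ru(r)$, and the hypothesis $u\in\bar{\scr U}$ (with $ru(r)$ increasing and concave and $1/(ru(r))$ non-integrable at $0$) is exactly what closes it as a Bihari-type comparison, whose solution is the function $C(T,\|\phi-\psi\|_\infty)$ built from $G$ and $G^{-1}$. The monotonicity of $ru(r)$ then gives $\|X_t-Y_t\|_\infty^2u(\|X_t-Y_t\|_\infty^2)\le\Phi(T,\|\phi-\psi\|_\infty)$ uniformly in $t$, and collecting the constants $K_2K_3$, $K_1^2$, $K_2$ arising from the three sources---the Phase 1 cross term through the $\sigma(Y)\sigma(X)^{-1}$ bookkeeping, the one-point estimate (i), and the Phase 2 main term---delivers the second summand of the stated inequality. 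The principal obstacle I anticipate is precisely this Bihari pathwise comparison: because the functional drift carries the segment norm rather than the pointwise distance, a direct Gronwall argument on $|X-Y|^2$ does not close and one must iterate on the supremum process, which is why the stronger class $\bar{\scr U}$ is imposed in this section rather than the class $\scr U$ used in Section 2.
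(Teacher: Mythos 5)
Your overall strategy (attractive Girsanov coupling on $[0,T]$, entropy bound on the density, Young's inequality) is the paper's, and you correctly identify the roles of $\bar{\scr U}$, of condition (iv), and of a Bihari-type comparison. But two steps, as written, do not work.

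First, the claim $X(T)=Y(T)$. In your Phase~1 the Girsanov drift is only $\eta_1$, so under $\P$ the process $Y$ keeps its \emph{own} functional drift $a(t,Y_t)$ on $[0,T]$. The coupling-time argument then only yields that the first meeting time $\tau$ satisfies $\tau\le T$ $\Q$-a.s.; it does not give $X(T)=Y(T)$. Indeed, at time $\tau$ the segments $X_\tau$ and $Y_\tau$ still differ, so $a(t,X_t)\ne a(t,Y_t)$ on $(\tau,\tau+r_0)$ and the two processes separate again immediately after $\tau$; switching $Y$'s drift to $a(t,X_t)$ only at time $T$ comes too late to prevent this. The paper avoids the problem by putting $a(t,X_t)$ into $Y$'s equation from time $0$ (see (\ref{5.2})) and absorbing the discrepancy $\si(t,Y(t))^{-1}\{a(t,X_t)-a(t,Y_t)\}$ into a single Girsanov drift $\LL$ on $[0,T\wedge\tt\tau)$: then for $t\ge\tt\tau$ the forward equations for $X$ and $Y$ literally coincide, so $X\equiv Y$ on $[\tt\tau,\infty)$ and hence $X_{T+r_0}=Y_{T+r_0}$. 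Your two-phase scheme could probably be repaired (e.g.\ by showing $\E_{\Q}[\varphi(|Z(t)|^2)/\xi(t)]$ stays bounded as $t\uparrow T$ and using continuity of $Z$ to force $Z(T)=0$), but as written the identity $X_{T+r_0}=Y_{T+r_0}$, on which everything rests, is unjustified.

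Second, the a priori bound. You invoke a \emph{deterministic} bound $\sup_{t}|X(t)-Y(t)|^2\le C(T,\|\phi-\psi\|_\infty)$ and deduce $\|Z_t\|_\infty^2u(\|Z_t\|_\infty^2)\le\Phi(T,\|\phi-\psi\|_\infty)$ pathwise and uniformly in $t$. No such pathwise bound holds: the martingale part of $|Z(t)|^2$ has unbounded fluctuations, so a pathwise Bihari comparison cannot close. What the paper proves (Lemma \ref{lem43}) is a bound on $\E_{s,n}\sup_t|Z(t)|^2$ under the weighted measure $\Q_{s,n}$, obtained from the Burkholder--Davis--Gundy inequality followed by Jensen's inequality via the concavity of $r\mapsto ru(r)$ (this is where $\bar{\scr U}$ enters, not in a pathwise iteration on the supremum process); a second application of concavity then gives $\E_{s,n}[\ell_n(T)u(\ell_n(T))]\le\Phi(T,\|Z_0\|_\infty)$. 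Since the quantity you ultimately need to control is $\E_{\Q}\int|\LL(t)|^2\,\d t$, the expectation bound suffices, but the step of your argument that produces $\Phi$ must be reformulated in this way to be correct.
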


The proof is modified from Section 2. But in the present setting we are not able to derive the Harnack inequality with power as in Theorem \ref{thm1}(3).
The reason is that according to the proof of Lemma \ref{lem44} below, to estimate $\E R(\tt\tau)^q$ for $q>0$ one needs upper bounds   of the exponential moments
of $\|Z_t\|_\infty^2u(\|Z_t\|_\infty^2)$, which is however not available.

Let $T>0$ and
$\phi,\psi\in\C$ be fixed.   Combining the
construction  of coupling  in Section 2 for the SDE case with
non-Lipschitz  coefficients and that in \cite{WY10} for the SFDE
case with Lipschitz coefficients, we construct the coupling process
$(X(t),Y(t))$ as follows:
\begin{equation}\label{5.2}
\begin{cases}
\d X(t)=\{b(t,X(t))+a(t, X_t)\}\d t+\si(t, X(t))\d B(t),\ X_0=\phi,\\
\d Y(t)=\{b(t, Y(t))+a(t, X_t)\}\d t+\si(t, Y(t))\d B(t)\\
  \qquad+ \frac{\si(t,Y(t))\si(t, X(t))^{-1}(X(t)-Y(t))}{\tt\xi(t)}1_{[0,T)}(t) u(|X(t)-Y(t)|^2)\d t, \
  Y_0=\psi,
\end{cases}
\end{equation} where
$$\tt\xi(t)= \ff{T-t}{2\gg},\ \ t\in [0,T].$$

 As explained in Subsection 2.1 for
the existence of solution to (\ref{2.3}) using Theorem \ref{T3.1},
due to Theorem \ref{T4.2} and (i) in {\bf (A)}, the equation (\ref{5.2})
has a unique solution up to the time $T\wedge \tt\zeta\wedge
\tt\tau$, where
$$\tt\tau:=\inf\{t>0:\ X(t)=Y(t)\},\ \
 \tt\zeta:=\lim_{n\ra \infty}\tilde \zeta_n;\ \
\tt\zeta_n:=\inf\{t\in[0,\tilde T): \  |Y(t)|\geq n\}.$$ From {\bf
(A)} it is easy to see that $\tt\zeta\ge T.$  If $\tt\tau\le T$, we
set $Y(t)= X(t)$ for $t\ge \tt \tau$ so that   $(X(t),Y(t))$ solves
(\ref{5.2}) for all $t\ge 0$ (this is not true if $\si(t,Y(t))$ is
replaced by $\bar\si(t,Y_t)$ depending on $Y(t+s), s\in [-r_0,0]$).
In particular, $\tt\tau\le T$ implies that $X_{T+r_0}=X_{T+r_0}.$ To
show that $\tt\tau\le T$, we make use of the Girsanov theorem as in
Section 2. Let $Z(t)=X(t)-Y(t)$ and
$$\LL(t):=\frac{
u(|Z(t)|^2)\si(t, X(t))^{-1}Z(t)}{\tt\xi(t)
}\!+\!\si(t,Y(t))^{-1}\big(a(t,X_t)\!-\!a(t,Y_t)\big).$$

We intend to  show that
\begin{equation}\label{4.5} R(s):= \exp\bigg[-\int_0^s\<\LL(t),\d
B(t)\> -\ff 1 2 \int_0^s |\LL(t)|^2\d t\bigg]\end{equation} is a
uniformly integrable martingale for $s\in [0,
T\wedge  \tt\tau),$ so that due to the Girsanov
theorem,
\begin{equation}\label{4.4}
\begin{split}
 \tilde B(s):= B(s)+ \int_0^s  \LL(t)\d t,
\ \ t<T\wedge  \tt \tau
\end{split}
\end{equation}  is a
$d$-dimensional Brownian motion under the probability $\Q:=
R(\tt\tau\land\tilde \zeta \land T)\P.$ To this end, we make use of
the approximation argument as in Section 2.

Define $$\dis \tt\tau_n=\inf\{t\in [0,\tilde{T}); \ |X(t)-Y(t)|\geq
n^{-1}\},\ \ n\ge 1.$$ By the Girsanov theorem, for any $s\in (0,T)$ and $n\ge 1$,  $\{R(t)\}_{t\in [0,
s\land\tt\tau_n\land\tt\zeta_n]}$ is a martingale and $\{\tt
B(t)\}_{t\in [0, s\land\tt\tau_n\land\tt\zeta_n]}$ is a
$d$-dimensional Brownian motion under the probability
$\Q_{s,n}:=R(s\!\wedge\! \tt\zeta_n\!\land\! \tt\tau_n)\P.$

 For $t< T\land \tt\zeta_n\wedge
\tt\tau_n$, rewrite (\ref{5.2}) as $$\begin{cases}
\d X(t) =\{b(t, X(t))+ a(t, X_t)\}\d t+\si(t,X(t))\d \tilde B(t)-\frac{Z(t)}{\tt\xi(t) } u(|Z(t)|^2)\d t\\
        \qquad\qquad-\si(t,X(t))\si(t,Y(t))^{-1}\big(a(t,X_t)-a(t,Y_t)\big)\d t,\  \ X_0=\phi,\\
\d Y(t) =\{b(t,Y(t))+a(t, Y_t)\}\d t+\si(t, Y(t))\d \tilde B(t),\ \
Y_0=\psi.
\end{cases}$$
We have $Z_0=\phi-\psi$ and
\beq\label{ZZ}\beg{split}
\d Z(t)&=\big(\si(t,X(t))\!-\!\si(t,Y(t))\big)\d \tilde B(t)\!+\!
\Big(b(t,X(t))\!-\!b(t,Y(t))\!-\!\frac{  u(|Z(t)|^2)Z(t)}{\tt\xi(t) }\Big)\d t\\
&\qquad+\big\{\si(t,Y(t))- \si(t,X(t))\big\}\si(t,Y(t))^{-1} (a(t,
X_t)-a(t,Y_t))\d t
\end{split} \end{equation} for  $t<T\land\tt\tau_n\land\tt\zeta_n.$

\begin{lem}\label{lem43} Assume (i),  (ii) and (iii) in {\bf (A)}.
  Let $\E_{s,n}$ stands for
taking the expectation w.r.t. the probability measure
$\Q_{s,n}:=R(s\!\wedge\! \tt\zeta_n\!\land\! \tt\tau_n)\P. $ Then
$$
\sup_{n\ge 1, s\in[0, T)}\E_{s,n}\bigg(\sup_{-r_0\leq t\leq
s\wedge \tilde\zeta_n\tt\wedge \tt\tau_n}\!\!\!|Z(t)|^2\bigg)\leq C(T,\|Z_0\|_\infty).
 $$
\end{lem}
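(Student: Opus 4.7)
The plan is to work under $\Q_{s,n}$, where $\tilde B$ is a $d$-dimensional Brownian motion on $[0,\rho_{s,n}]$ with $\rho_{s,n}:=s\wedge\tt\zeta_n\wedge\tt\tau_n$, so that $Z=X-Y$ obeys $(\ref{ZZ})$. Applying It\^o's formula to $|Z(t)|^2$, I would bound the drift term by term: (i) gives $2\langle Z,b(X)-b(Y)\rangle+\hsn{\si(X)-\si(Y)}^2\le 2K_1(T)|Z|^2 u(|Z|^2)$; the pullback $-2|Z|^2 u(|Z|^2)/\tt\xi(t)$ is nonpositive and may be dropped; and the Girsanov correction $2\langle Z,[\si(Y)-\si(X)]\si(Y)^{-1}(a(X_t)-a(Y_t))\rangle$ is controlled via Cauchy--Schwarz together with (ii), (iii), and the bound on $\|[\si(Y)-\si(X)]\si(Y)^{-1}\|^2$ from (iv) by $|Z|^2+K_2(T)K_3(T)\|Z_t\|_\infty^2 u(\|Z_t\|_\infty^2)$ via $2ab\le a^2+b^2$. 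Using $u\ge 1$, the monotonicity of $s\mapsto su(s)$ on $\bar\U$, and $|Z(t)|\le\|Z_t\|_\infty$, every drift contribution collapses to a single multiple of $\|Z_t\|_\infty^2 u(\|Z_t\|_\infty^2)$.

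Next I would take the supremum of $|Z(t)|^2$ over $t\in[-r_0,\rho_{s,n}]$, noting that the $[-r_0,0]$ part contributes only $\|Z_0\|_\infty^2$, and then the $\Q_{s,n}$-expectation. Using (ii) and the monotonicity of $su(s)$, the bracket of the martingale part satisfies
\[ \langle\tilde M\rangle(\rho_{s,n})\le 4K(T)\sup_{-r_0\le u\le\rho_{s,n}}|Z(u)|^2\int_0^{\rho_{s,n}}\|Z_r\|_\infty^2 u(\|Z_r\|_\infty^2)\,\d r. \]
Applying the Burkholder--Davis--Gundy (or Doob's $L^2$) inequality, Cauchy--Schwarz in expectation, and Young's inequality to absorb a fraction of $h(s):=\E_{s,n}\sup_{-r_0\le t\le\rho_{s,n}}|Z(t)|^2$ into the left-hand side yields an integral inequality
\[ h(s)\le 2\|Z_0\|_\infty^2+C_0\int_0^s\E_{s,n}\big[\|Z_r\|_\infty^2 u(\|Z_r\|_\infty^2)\big]\,\d r, \]
for an explicit constant $C_0$ of the shape $4\{K_1(T)+2K_2(T)K_3(T)+32K(T)\}$ (after tracking the BDG/Young constants, the $32K(T)$ arising from the martingale estimate).

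To close the estimate, I would invoke the concavity of $s\mapsto su(s)$ built into $\bar\U$: Jensen's inequality gives $\E_{s,n}[\|Z_r\|_\infty^2 u(\|Z_r\|_\infty^2)]\le h(r) u(h(r))$ (using also that $su(s)$ is increasing and $\E_{s,n}\|Z_r\|_\infty^2\le h(r)$), leading to the Osgood-type inequality $h(s)\le 2\|Z_0\|_\infty^2+C_0\int_0^s h(r)u(h(r))\,\d r$. Comparison with the ODE $y'=C_0 y u(y)$, i.e.\ integration of $\d G(y)=\d y/(yu(y))$, then produces $G(h(s))\le G(2\|Z_0\|_\infty^2)+C_0 s$ for $s\le T$, and inverting $G$ together with the definition of $C(T,r)$ gives $h(s)\le C(T,\|Z_0\|_\infty)$, uniformly in $n\ge 1$ and $s\in[0,T)$.

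The main obstacle will be the bookkeeping between the pointwise quantity $|Z(t)|$ naturally produced by It\^o's formula and the sup-norm $\|Z_t\|_\infty$ forced by hypothesis (iii): since $u$ itself need not be monotone on $\bar\U$ (only $s u(s)$ is), one must systematically pass from $|Z|^2 u(|Z|^2)$ to $\|Z_t\|_\infty^2 u(\|Z_t\|_\infty^2)$ via the $su(s)$-monotonicity before Jensen's inequality can close the loop. A secondary but essential technical point is that the absorption step implicit in Young's inequality requires $h(s)<\infty$, which is precisely why the truncation at $\tt\zeta_n$ was introduced.
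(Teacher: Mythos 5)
Your proposal follows essentially the same route as the paper's proof: It\^o's formula for $|Z(t)|^2$ with the nonpositive pullback drift dropped, the Burkholder--Davis--Gundy inequality for the running supremum of the martingale part, absorption of a fraction of $\E_{s,n}\ell_n(t)$ via Young's inequality, the passage from $|Z|^2u(|Z|^2)$ to $\ell_n u(\ell_n)$ through the monotonicity of $s\mapsto su(s)$, Jensen's inequality via its concavity, and finally Bihari's inequality, landing on the same constant $4\{K_1(T)+2K_2(T)K_3(T)+32K(T)\}$. Note only that, exactly as in the paper's own argument, the control of the Girsanov correction term requires the first bound in (iv) (the constant $K_3$), even though the lemma's hypotheses list only (i)--(iii); this is an imprecision of the statement rather than a gap in your proof.
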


\begin{proof} Let $\ell_n(t)= \sup_{-r_0\le r\le t\land \tt\tau_n\land\tt\zeta_n} |Z(r)|^2.$ By the first inequality  (i) and (iii) in {\bf (A)}, (\ref{ZZ}) and using the It\^o formula, we get
\beg{equation}\label{AB0}\beg{split} \d |Z(t)|^2
 \leq & 2\big\< Z(t), (\si(t,X(t))\!-\!\si(t,Y(t)))\d\tilde B(t)
 \big\>\\
 &+2\Big(K_1(t)|Z(t)|^2  u(|Z(t)|^2)+ |Z(t)|\ss{K_2(t)K_3(t) \|Z_t\|_\infty^2u(\|Z_t\|_\infty^2)}\Big)\d
 t\end{split}\end{equation} for $t\le
 s\land\tt\tau_n\land\tt\zeta_n.$ Moreover, according to the Burkholder-Davis-Gundy
 inequality,   for any continuous martingale $M(t)$ one has
$$ \E \sup_{s\in [0,t]} M(s)  \le 2\ss 2 \E\ss{\<M\>(t)},\ \ t\ge 0.$$
 Combining this with (\ref{AB0}) and (ii) in {\bf (A)}, and noting that $su(s)$
is increasing in $s$ so that
$$|Z(t)|^2u(|Z(t)|^2)\le   \|Z_t\|_\infty^2 u(||Z_t\|_\infty^2)\le
\ell_n(t)u(\ell_n(t)),\ \ t\le s\land\tt\tau_n\land\tt\zeta_n,$$ we
obtain
\begin{align*}
\E_{s,n} \ell_n(t) &\le \|Z_0\|_\infty^2 + 8 \E_{s,n}\ss{K(T)}
\bigg( \int_0^t \ell_n(r)^2 u(\ell_n(r))\d r\bigg)^{1/2} +\ff 1 4 \E_{s,n}\ell_n(t)\\
&\quad +\{2K_1(T) +4K_2(T)K_3(T)\}\int_0^t \E_{s,n} \ell_n(r) u(\ell_n(r))\d r \\
&\le \|Z_0\|_\infty^2\!+\!\ff 1 2\E_{s,n} \ell_n(t)\! +\! 2\{K_1(T)
\!+\!2K_2(T)K_3(T) + 32  K(T)\}\int_0^t  \E_{s,n}
\big[\ell_n(r)u(\ell_n(r))\big]\d r.
\end{align*}
Since $su(s)$ is concave in $s$ so that $\E_{s,n}
[\ell_n(r)u(\ell_n(r))]\le \E_{s,n}\ell_n(r) u(\E_{s,n}\ell_n(r)),$
this implies that
$$ \E_{s,n} \ell_n(t) \le 2 \|Z_0\|_\infty^2  + 4\{K_1(T) +2K_2(T)K_3(T)+32K(T)\}\int_0^t
\E_{s,n}\ell_n(r) u(\E_{s,n}\ell_n(r))\d r,\ \ t\le s.$$  Therefore,
the desired estimate follows from the Bihari's inequality.\end{proof}

\begin{lem}\label{lem44}Assume  {\bf (A)}.  If
$(\ref{U})$ holds for some constant $\gg>0$,  then
\beg{equation*}\beg{split} &\sup_{s\in [0,\tilde T), n\geq 1}\E\big[R(s\!\land\! \tt\zeta_n\!\wedge\! \tt\tau_n)\log R(s\!\land \!\tt\zeta_n\!\wedge\! \tt\tau_n)\big]\\
 &\leq K_4(T) \Big(\ff{2\varphi(|Z(0)|^2)}{T} + T
\big\{8K_1(T)^2+ 8K_2(T)K_3(T)+K_2(T)\big\}\Phi(T,\|Z_0\|_\infty)\Big)
 \end{split}
\end{equation*}
\end{lem}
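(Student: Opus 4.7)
The strategy combines the Girsanov entropy identity with an It\^o estimate for $\varphi(|Z(t)|^2)/\tt\xi(t)$, extending the SDE argument of Section 2 to accommodate the functional drift. By the Girsanov theorem and the standard entropy formula,
\[
\E\bigl[R(s\!\wedge\!\tt\zeta_n\!\wedge\!\tt\tau_n)\log R(s\!\wedge\!\tt\zeta_n\!\wedge\!\tt\tau_n)\bigr] = \tfrac12\,\E_{\Q_{s,n}}\!\int_0^{s\wedge\tt\zeta_n\wedge\tt\tau_n}\!|\LL(t)|^2\,\d t.
\]
Splitting $\LL$ into its coupling-control piece $u(|Z|^2)\si(t,X)^{-1}Z/\tt\xi$ and its drift-adjustment piece $\si(t,Y)^{-1}(a(t,X_t)-a(t,Y_t))$ and invoking (iii), (iv) gives the pointwise bound $|\LL|^2\le 2K_4(T)\{|Z|^2u(|Z|^2)^2/\tt\xi(t)^2+K_2(T)\|Z_t\|_\infty^2 u(\|Z_t\|_\infty^2)\}$. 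For the second summand, Lemma \ref{lem43} yields $\E_{\Q_{s,n}}\sup_{r\le t}|Z(r)|^2\le C(T,\|Z_0\|_\infty)$ uniformly in $s,n$; since $s\mapsto su(s)$ is increasing and concave by definition of $\bar{\U}$, Jensen's inequality gives $\E_{\Q_{s,n}}[\|Z_t\|_\infty^2 u(\|Z_t\|_\infty^2)]\le\Phi(T,\|Z_0\|_\infty)$, so integrating in $t\in[0,T]$ contributes the term $K_4(T)K_2(T)\,T\,\Phi(T,\|Z_0\|_\infty)$ to the bound.

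The dominant first summand is controlled by It\^o applied to $\varphi(|Z(t)|^2)/\tt\xi(t)$ under $\Q_{s,n}$. A key preliminary observation is that $u\in\bar\U$ forces $u'\le 0$: since $g(s):=su(s)$ is concave and extends continuously by $g(0^+)=0$, the quotient $u(s)=g(s)/s$ is non-increasing, so the It\^o quadratic-variation contribution (of the sign of $u'$) is non-positive and may be dropped. Using (i) to bound the drift of $\d|Z|^2$, the identity $\tt\xi'=-1/(2\gg)$, and condition (\ref{U}) to bound the time-derivative contribution $-\tt\xi'\varphi/\tt\xi^2\le |Z|^2u^2/(2\tt\xi^2)$, one arrives at (modulo a local $\Q_{s,n}$-martingale)
\[
\d\tfrac{\varphi(|Z|^2)}{\tt\xi(t)}\le -\tfrac{3|Z|^2u^2}{2\tt\xi(t)^2}\d t + \tfrac{2K_1(T)|Z|^2u^2}{\tt\xi(t)}\d t + \tfrac{2u(|Z|^2)|Z|}{\tt\xi(t)}\ss{K_3(T)K_2(T)\|Z_t\|_\infty^2 u(\|Z_t\|_\infty^2)}\,\d t,
\]
where (iii) and (iv) have been used to bound $|(\si(X)-\si(Y))\si(Y)^{-1}(a(X_t)-a(Y_t))|$. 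Two Young inequalities $2ab\le\alpha a^2+b^2/\alpha$ with $a=|Z|u/\tt\xi$ and suitable $\alpha$ convert the two positive drifts into a portion fully absorbed by the negative term $-\tfrac{3|Z|^2u^2}{2\tt\xi^2}$ plus residuals of the form $8K_1(T)^2|Z|^2u(|Z|^2)^2$ and $8K_2(T)K_3(T)\|Z_t\|_\infty^2 u(\|Z_t\|_\infty^2)$. Integrating, taking $\Q_{s,n}$-expectation, dropping the non-negative boundary term and invoking Lemma \ref{lem43} together with the monotonicity $|Z|^2u(|Z|^2)\le \|Z_t\|_\infty^2 u(\|Z_t\|_\infty^2)$ yields
\[
\E_{\Q_{s,n}}\!\int_0^{\cdot}\!\tfrac{|Z|^2u^2}{\tt\xi(t)^2}\,\d t\le \tfrac{2\gg\varphi(|Z(0)|^2)}{T}+T\{8K_1(T)^2+8K_2(T)K_3(T)\}\Phi(T,\|Z_0\|_\infty),
\]
which, multiplied by $K_4(T)$ and combined with the $a$-piece estimate, gives the claim.

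The main obstacle is the SFDE-specific cross term $\la Z,\,(\si(X)-\si(Y))\si(Y)^{-1}(a(X_t)-a(Y_t))\raa$, which has no analogue in Section 2. The Young weights must be calibrated so that its total $1/\tt\xi(t)^2$ contribution remains strictly below the coefficient $3/2$ released by the coupling control in the It\^o expansion of $\varphi(|Z|^2)/\tt\xi$, otherwise the dominant negative drift is destroyed and $\E_{\Q_{s,n}}\!\int|Z|^2u^2/\tt\xi^2\,\d t$ cannot be bounded. A secondary subtlety is that the residual $|Z|^2u(|Z|^2)^2$ integrand from the $K_1$ Young step must be dominated by $\|Z_t\|_\infty^2u(\|Z_t\|_\infty^2)$ (up to a constant), which relies precisely on the monotonicity of $s\mapsto su(s)$ inherited from its concavity with $g(0^+)=0$.
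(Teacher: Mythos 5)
Your proposal reproduces the paper's argument essentially step for step: the Girsanov entropy identity, the splitting of $|\LL(t)|^2$ via (iii)--(iv) of {\bf (A)}, the It\^o estimate for $\varphi(|Z(t)|^2)/\tt\xi(t)$ in which $(\ref{U})$ and $\tt\xi'=-1/(2\gg)$ release the negative drift $-\tfrac{3}{2}|Z(t)|^2u^2(|Z(t)|^2)/\tt\xi(t)^2$, the Young absorption of the $K_1$- and cross-terms into that drift, and Lemma \ref{lem43} combined with Jensen's inequality and the concavity of $s\mapsto su(s)$ to produce $\Phi(T,\|Z_0\|_\infty)$. Your explicit derivation of $u'\le 0$ from $u\in\bar{\scr U}$ fills a point the paper leaves implicit; the one fragile spot is your claim that the residual $|Z(t)|^2u(|Z(t)|^2)^2$ is dominated by $\|Z_t\|_\infty^2u(\|Z_t\|_\infty^2)$ through the monotonicity of $su(s)$ alone (the extra factor $u(|Z(t)|^2)$ is non-increasing, hence not controlled by $u(\|Z_t\|_\infty^2)$), but the paper's own write-up carries the same wrinkle, as it records the $K_1$-drift of $\varphi(|Z(t)|^2)/\tt\xi(t)$ with only a single power of $u$.
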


\begin{proof} By the first inequality in {\bf (A2)}, (\ref{ZZ}) and using the It\^o formula,
we obtain \beg{equation*}\beg{split}  \d |Z(t)|^2
 \leq & 2\big\< Z(t), (\si(t,X(t))\!-\!\si(t,Y(t)))\d\tilde B(t)\big\>
 - \ff{2|Z(t)|^2u(|Z(t)|^2)}{\tt\xi(t)} \d t\\
&+2\Big(K_1(t)|Z(t)|^2  u(|Z(t)|^2)+ |Z(t)|\ss{K_2(t)K_3(t)
\|Z_t\|_\infty^2u(\|Z_t\|_\infty^2)}\Big)\d
 t \end{split}\end{equation*} for $t\le
 s\land\tt\tau_n\land\tt\zeta_n.$
So, as in the proof of Lemma \ref{lem1}, there exists a
$\Q_{s,n}$-martingale $M(t)$ such that for $t\le
s\land\tt\tau_n\land\tt\zeta_n$, \beg{equation*}\beg{split}
\d\ff{\varphi(|Z(t)|^2)}{\tt\xi(t)} \le & \d M(t)- \ff{|Z(t)|^2
u^2(|Z(t)|^2)}{\tt\xi(t)^2} \big(2+\gg \xi'(t)\big)\,\d t\\
& + \ff 2 {\tt\xi(t)} \Big(K_1(t)|Z(t)|^2  u(|(Z(t)|^2))\!+\!
|Z(t)|\ss{K_2(t)K_3(t)
\|Z_t\|_\infty^2u(\|Z_t\|_\infty^2)}\Big)\d t \\
\le & \d M(t) \!+\!  \bigg( 4\{K_1(t)^2\!+\! K_2(t)K_3(t)\}
 \|Z_t\|_\infty^2u(\|Z_t\|_\infty^2)\d t
   \!-\!\ff{|Z(t)|^2u^2(|Z(t)|^2)}{2\tt\xi(t)^2} \bigg) \d
t,\end{split}\end{equation*} where in the last step we have used
$u\ge 1$ and $\tt\xi'(t)=-\ff 1 {2\gg}$. Therefore,
\beq\label{CCC}\beg{split}
&\E_{s,n}\int_0^{s\land\tt\tau_n\land\tt\zeta_n}
\ff{|Z(t)|^2u^2(|Z(t)|^2)}{\tt\xi(t)^2}\d t \\
&\le \ff{2\varphi(|Z(0)|^2)}{\tt\xi(0)} +8T\{K_1(T)^2+
K_2(T)K_3(T)\}
 \E_{s,n}\ell_n(T) u(\ell_n(T)).\end{split}\end{equation}   Since by Lemma
\ref{lem43} and  the concavity of $r\mapsto ru(r)$
$$\E_{s,n}\ell_n(T) u(\ell_n(T))\le
C(T,\|Z_0\|_\infty)u(C(T,\|Z_0\|_\infty))=\Phi(T,\|Z_0\|_\infty),$$ combining (\ref{CCC}) with
  Lemma \ref{lem43} and (iv) in {\bf (A)} we arrive at  that  \beg{equation*}\beg{split} & \E
\big[R(s\land\tt\tau_n\land\tt\zeta_n)\log
R(s\land\tt\tau_n\land\tt\zeta_n)\big] =\ff 1 2 \,\E_{s,n}
\int_0^{s\land\tt\tau_n\land\tt\zeta_n} |\LL(t)|^2\d t\\
&=   K_4(T)\E_{s,n}
\int_0^{s\land\tt\tau_n\land\tt\zeta_n}\Big(
\ff{|Z(t)|^2u^2(|Z(t)|^2)}{\tt\xi(t)^2} + K_2(T)
\|Z_t\|_\infty^2u(\|Z_t\|_\infty^2)\Big) \d t\\
&\le K_4(T) \Big(\ff{2\gamma\varphi(|Z(0)|^2)}{T} + T
\big\{8K_1(T)^2+ 8K_2(T)K_3(T)+K_2(T)\big\}\Phi(T,\|Z_0\|_\infty)\Big).\end{split}\end{equation*}
\end{proof}

\beg{proof}[Proof of Theorem \ref{thm41}] As discussed in Section 2
that Lemma \ref{lem44} and (\ref{CCC}) imply that   $\tt\tau\leq
 T\wedge  \tt\zeta$  $\Q$-a.s., where $\Q:= R(\tt
\tau\land T\land\tt \zeta)\P= R(\tt\tau)\P.$ Since by the construction we have $X(t)=Y(t)$ for $t\ge\tt\tau$, this implies that $X_{T+r_0}=Y_{T+r_0}. $  Applying the Young inequality and Lemma
\ref{lem44}, we obtain
 \begin{equation*}
\begin{split}
&P_{T+r_0}\log f(\psi)-\log P_{T\!+r_0} f (\phi)=\E_{\q}\big[\log f(Y_{T+r_0})\big]-\log P_{T +r_0} f (\phi)\\
&=\E\big[R(\tilde\tau)\log f(X_{T+r_0})\big]-\log
\E\big[f(X_{T+r_0})\big]\le
  \E\big[R(\tilde\tau)\log R(\tilde \tau)\big]\\
  &\leq K_4(T) \Big(\ff{2\gg\varphi(|Z(0)|^2)}{T} \!+\! T
\big\{8K_1(T)^2\!+\! 8K_2(T)K_3(T)\!+\!K_2(T)\big\}\Phi(T,\|Z_0\|_\infty)\Big).
\end{split}
\end{equation*}\end{proof}

\section{  Existence and uniqueness of solutions}

There are a lot of literature on the existence and uniqueness  of
SDEs and SFDEs  under non-Lipschitz condition, see e.g. Taniguchi
\cite{Tan92, Tan10} and references therein. In the following two
subsections, for the construction of couplings given in the previous
sections, we present below  two results in this direction for SDEs
and SFDEs on open domains respectively.

\subsection{Stochastic differential equations}

Let $D$ be a non-empty  open domain in $\R^d,$ and let $T>0$ be fixed. Consider the following SDE:
\begin{equation}\label{3.1}
\d X(t)=\si(t,X(t))\d B(t)+b(t,X(t))\d t,
\end{equation}
where $(B(t))_{t\geq 0}$ is the $m$-dimensional Brownian motion on a
complete filtered probability space $(\Omega, (\F_t)_{t\geq 0},
\mathscr F, \P)$, $\si:[0,T)\times D\ra \R^d\otimes \R^m$ and $b:
[0,T)\times D\ra \R^d$ are measurable, locally bounded in the first
variable and continuous in the second variable.

\beg{thm}\label{T3.1} If there exist $u\in \U$, a sequence of compact sets ${\bf K}_n\uparrow D$ and functions $\{\Theta_n\}_{n\ge 1} \in C([0,T); (0,\infty))$ such that for every $n\ge 1$,
\beq\label{3.2} \beg{split} &2\<b(t,x)-b(t,y), x-y\> +\|\si(t,x)-\si(t,y)\|_{\mathrm{HS}}^2\\
 &\le \Theta_n(t)|x-y|^2 u(|x-y|^2),\ \ |x-y|\le 1, x,y\in {\bf K}_n, t\in [0,T).\end{split}\end{equation} Then for any initial data $X(0)\in D$, the equation $(\ref{3.1})$ has a unique solution $X(t)$ up to life time
$$\zeta:= T\land \lim_{n\to\infty} \inf\big\{t\in [0,T):\ X(t)\notin {\bf K}_n\big\},$$ where $\inf\emptyset :=\infty.$ \end{thm}

\beg{proof} For each $n\ge 1$, we may find $h_n\in C^\infty (\R^d)$ with compact support contained in $D$ such that $h_n|_{{\bf K}_n}=1$. Let
$$b_n(t,x)= h_n(x) b(t,x),\ \ \si_n(t,x) = h_n(x)\si(t,x).$$ Then for any $n\ge 1$, $b_n$ and $\si_n$ are bounded   on $[0, \ff{nT}{n+1}]\times \R^d$ and continuous in the second variable.
According to the Skorokhod theorem \cite{SK} (see also \cite[Theorem
0.1]{HS}), the equation \beq\label{3.3} \d X_n(t)=\si_n(t, X_n(t))\d
B(t)+b_n(t, X_n(t))\d t,\ \ X_n( 0)=X_0\end{equation} has a weak
solution for $t\in [0, \ff{nT}{n+1}].$ So, by Yamada-Watanabe
principle \cite{YW}, to prove the existence and uniqueness of the
(strong) solution, we only need to verify the pathwise uniqueness.

Let $X_n(t),\tt X_n(t)$ be two solutions to (\ref{3.3}) for $t\in
[0, \ff{nT}{n+1}].$ Since the support of $h_n$ is a compact subset
of $D$ and since $K_m\uparrow D$, there exists $m>n$ such that
$K_m\supset {\rm supp}\, h_n$. Then (\ref{3.2}) yields that
$$2\<b_n(t,x)-b_n(t,y), x-y\> +\hsn{\si_n(t,x)-\si_n(t,y)}^2
 \le C_n|x-y|^2 u(|x-y|^2)$$ holds for some constant $C_n>0,$   all $t\in [0, \ff{nT}{n+1}]$ and $x,y\in \R^d$ with $|x-y|\le 1.$ By the It\^o formula, this implies
\begin{equation}\label{3.4}
\begin{split}
\d |X_n(t)-\tt X_n(t)|^2 &\le C_n |X_n(t)-\tt X_n(t)|^2 u(|X_n(t)-\tt X_n(t)|^2) \d t\\& \ \ + 2 \<X_n(t) -\tt X_n(t), \{\si_n(t,X_n(t))
-\si_n(t, \tt X_n(t))\}\d B(t)\>
\end{split}
\end{equation} for
 $t\in [0, \ff{nT}{n+1}].$ On the other hand, $u\in \U$ implies that
$$u(r)+ru'(r)\ge \ll,\ \ \ r\in [0,\rho_0]$$ holds for some constants $\ll, \rho_0>0.$ Let
$$\Psi_\vv(r)= \exp\bigg[\ll\int_1^r\ff{\d s}{\vv+ su(s)}\bigg],\ \ r,\vv\ge 0.$$ Then, for any $\vv>0,$ we have $\Psi_\vv\in C^2([0,\infty))$ and
\beg{equation*}\beg{split} & r u(r) \Psi_\vv'(r) =\ff{\ll ru(r)}{\vv+ru(r)}\Psi_\vv(r)\le \ll\Psi_\vv (r),\\
& \Psi_\vv''(r)= \ff{\ll^2-\ll \{u(r)+ r u'(r)\}}{(\vv+ r u(r))^2} \le 0,\ \ r\in [0,\rho_0].\end{split}\end{equation*} Therefore, letting
$$\tau_0=\inf\Big\{t\in \Big[0, \ff{nT}{n+1}\Big]:\ |X_n(t)-\tt X_n(t)|^2 \ge \rho_0\Big\},$$ it follows from (\ref{3.4}) and the It\^o formula that
\beg{equation*}\beg{split} &\d\Psi_\vv(|X_n(t)-\tt X_n(t)|^2)
\le \ll C_n \Psi_\vv(|X_n(t)-\tt X_n(t)|^2) \d t \\
&\qquad+ 2 \Psi_\vv'(|X_n(t)-\tt X_n(t)|^2) \<X_n(t) -\tt X_n(t), \{\si_n(t,X_n(t))-\si_n(t, \tt X_n(t))\}\d B(t)\>\end{split}\end{equation*} holds for $t\le \tau_0\land   \ff{nT}{n+1}.$ Hence,
$$ \E \Psi_\vv(|X_n(t\land\tau_0)-\tt X_n(t\land\tau_0)|^2)\le \e^{\ll C_n t}\Psi_\vv(0),\ \ t\le  \ff{nT}{n+1}.$$ Letting $\vv\downarrow 0$ and noting that $\Psi_0(0)=0,$ we arrive at $$\E \Psi_0(|X_n(t\land\tau_0)-\tt X_n(t\land\tau_0)|^2)=0.$$ Thus, $X_n(t\land\tau_0)-\tt X_n(t\land\tau_0)$ holds  for all  $t\in [0, \ff{nT}{n+1}].$ Therefore, $\tau_0=\infty$ and $X_n(t)=\tt X_n(t)$ holds for all  $t\in [0, \ff{nT}{n+1}].$
In conclusion, for every $n\ge 1$, the equation (\ref{3.3}) has a unique solution up to time $ \ff{nT}{n+1}.$

Since $h_n=1$ on ${\bf K}_n$ so that (\ref{3.3}) coincides with (\ref{3.1}) before the solution leaves ${\bf K}_n$, the equation (\ref{3.1}) has a unique solution $X(t)$ up to the time
$$\zeta_n:= \ff{nT}{n+1}\land \inf\{t\ge 0: X(t)\notin {\bf K}_n\}.$$ Therefore, (\ref{3.1}) has a unique solution up to the life time $\zeta=T\land\lim_{n\to\infty} \zeta_n.$ \end{proof}

\subsection{Stochastic functional differential equations}

Let $\C:=\C([-r_0,0];\R^d)$ for a fixed number $r_0>0$, and for any set $A\subset \R^d$ let $A^\C= \{\phi\in\C: \phi([-r_0,0])\subset A\}.$   For fixed  $T>0$ and a non-empty open domain $D$ in $\R^d$, we consider  the SFDE
\beq\label{FE} \d X(t)= \bar b(t, X_t)\d t + \bar \si(t,X_t)\d B(t),\ \ X_0\in D^\C,\end{equation}
where $B(t)$ is the $m$-dimensional Brownian motion, $\bar  b: [0,T)\times  D^\C\to \R^d$ and $\bar\si: [0,T)\times D^\C\to \R^d\otimes \R^m$ are measurable, bounded on $[0,t]\times K^\C$ for $t\in [0,T)$ and compact set $K\subset D$, and continuous in the second variable.

\begin{thm}\label{T4.2} Assume that  there exists a sequence of compact sets $\K_n\uparrow D$ such that for every $n\ge 1$,
\beq\label{FE2} 2\<\bar b(t, \phi)-\bar b(t,\psi), \phi(0)-\psi(0)\>
+\hsn{\bar\si(t, \phi)-\bar\si(t,\psi)}^2\le \|\phi-\psi\|_\infty^2
u_n(\|\phi-\psi\|_\infty^2)\end{equation} and \beq\label{FF3}
\hsn{\bar \si(t,\phi)-\bar\si(t,\psi)}^2 \le
\|\phi-\psi\|_\infty^2u_n(\|\phi-\psi\|_\infty^2)\end{equation} hold
for some $u_n\in\bar{\scr U}$ and all $\phi,\psi\in \K_n^\C, t\le
\ff{nT}{n+1}.$ Then for any initial data $X_0\in D^\C$, the equation
$(\ref{FE})$ has a unique solution $X(t)$ up to life time
$$\zeta:= T\land \lim_{n\to\infty} \inf\big\{t\in [0,T):\ X(t)\notin \K_n\big\}.$$ \end{thm}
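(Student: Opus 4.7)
The plan is to mirror the proof of Theorem \ref{T3.1}, with pointwise estimates replaced by sup-norm estimates on segment paths and the role of the auxiliary function $\Psi_\vv$ replaced by Bihari's inequality applied to the concave envelope $s\mapsto su_n(s)$. For each $n\ge 1$, choose an intermediate compact set $\K_n'$ with $\K_n\subset\mathrm{int}(\K_n')\subset\K_n'\subset D$, a continuous retraction $\rho_n:\C\to(\K_n')^\C$, and a cutoff $h_n\in C_c(D)$ with $h_n\equiv 1$ on $\K_n$ and $\mathrm{supp}\,h_n\subset\K_n'$. Set
\[\bar b_n(t,\phi)=h_n(\phi(0))\bar b(t,\rho_n(\phi)),\qquad \bar\si_n(t,\phi)=h_n(\phi(0))\bar\si(t,\rho_n(\phi)),\]
which are bounded on $[0,\tfrac{nT}{n+1}]\times\C$, continuous in $\phi$, and coincide with $\bar b,\bar\si$ on $\K_n^\C$. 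A Skorokhod-type existence theorem for SFDEs with bounded continuous coefficients (e.g.\ Taniguchi \cite{Tan92, Tan10}) produces a weak solution to the modified equation, so by Yamada--Watanabe it suffices to prove pathwise uniqueness for the modified equation.

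The pathwise uniqueness is the main step. Given two solutions $X_n,\tt X_n$ with common initial segment, set $Z(t)=X_n(t)-\tt X_n(t)$ and $\ell(t)=\sup_{-r_0\le s\le t}|Z(s)|^2$; since $Z_0\equiv 0$, $\|Z_t\|_\infty^2=\ell(t)$ for $t\ge 0$. It\^o's formula together with (\ref{FE2}) gives
\[\d|Z(t)|^2\le \|Z_t\|_\infty^2 u_n(\|Z_t\|_\infty^2)\,\d t+\d M(t),\]
and (\ref{FF3}) yields $\d\<M\>(t)\le 4|Z(t)|^2\|Z_t\|_\infty^2 u_n(\|Z_t\|_\infty^2)\,\d t$. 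Taking the supremum up to $t$, invoking Burkholder--Davis--Gundy, and using Young's inequality to absorb $\tfrac12\E\ell(t)$ on the left-hand side, one obtains
\[\E\ell(t)\le C_n\int_0^t\E\bigl[\ell(s)u_n(\ell(s))\bigr]\d s.\]
The concavity of $s\mapsto su_n(s)$, built into $\bar{\scr U}$, together with Jensen's inequality gives $\E[\ell(s)u_n(\ell(s))]\le\E\ell(s)\cdot u_n(\E\ell(s))$, and since $\int_0^1\tfrac{\d s}{su_n(s)}=\infty$, Bihari's inequality forces $\E\ell\equiv 0$. Hence $X_n=\tt X_n$ almost surely on $[0,\tfrac{nT}{n+1}]$.

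Whenever the segment $X_t$ lies in $\K_n^\C$ the modified equation reduces to (\ref{FE}); since $\K_n\uparrow D$, for all $n$ sufficiently large the initial segment $X_0$ already lies in $\K_n^\C$, so the strong solution constructed above solves (\ref{FE}) up to $\zeta_n:=\tfrac{nT}{n+1}\land\inf\{t\ge 0:X(t)\notin\K_n\}$. Pathwise uniqueness ensures the solutions for successive $n$ agree on their common domain, and letting $n\to\infty$ yields a unique solution up to the life time $\zeta$ as declared. The main obstacle is the mismatch between the pointwise nature of It\^o's formula, which produces $|Z(t)|^2u_n(|Z(t)|^2)$, and the sup-norm character of the coefficient bound, which supplies $\|Z_t\|_\infty^2u_n(\|Z_t\|_\infty^2)$; the monotonicity and concavity of $s\mapsto su_n(s)$ baked into $\bar{\scr U}$ are precisely what permit the reduction to a scalar Bihari inequality on $\E\ell(t)$.
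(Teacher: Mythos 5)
Your pathwise-uniqueness argument is essentially the paper's own: It\^o's formula plus (\ref{FE2}), the bound $\d\<M\>(t)\le 4|Z(t)|^2\|Z_t\|_\infty^2u_n(\|Z_t\|_\infty^2)\d t$ from (\ref{FF3}), Burkholder--Davis--Gundy with Young's inequality to absorb $\tfrac12\E\ell(t)$, then monotonicity and concavity of $s\mapsto su_n(s)$ to pass to a scalar Bihari inequality for $\E\ell(t)$. That part is correct and is the heart of the theorem.

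The one genuine soft spot is the weak-existence step. For SDEs (Theorem \ref{T3.1}) the paper can invoke Skorokhod's finite-dimensional existence theorem for bounded continuous coefficients off the shelf; for SFDEs the state space is $\C$, and no such ready-made theorem is cited or available in the same form. Your appeal to Taniguchi \cite{Tan92, Tan10} does not obviously cover your modified equation: those results proceed by successive approximation under modulus-of-continuity hypotheses of the type (\ref{FE2})--(\ref{FF3}), and your cutoff coefficients $\bar b_n,\bar\si_n$ (built from a merely continuous retraction $\rho_n$ and a cutoff $h_n(\phi(0))$) are only guaranteed bounded and continuous --- the retraction and cutoff can destroy the one-sided and modulus bounds, which is precisely why you fall back on ``bounded continuous'' existence in the first place. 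The paper closes this gap constructively: it mollifies the coefficients by setting $\bar\si_n(t,\phi)=\E\,\bar\si(t,\phi+n^{-1}\mathbf B)$ and similarly for $\bar b_n$ (with $\mathbf B$ a Brownian segment), invokes \cite{BWY} to see these are Lipschitz in $\phi$, solves the approximating equations strongly, proves tightness of the laws via a H\"older-type compact function $g_\vv$, and identifies the weak limit as a solution through the martingale-problem characterization and \cite[Theorem II.7.1]{IW}. You would need to either reproduce such a mollification/tightness argument or supply a reference that genuinely gives weak existence for SFDEs with bounded coefficients continuous in the segment variable; as written, this step is asserted rather than proved. The final patching of the localized solutions into a maximal solution up to $\zeta$ is fine and matches the paper.
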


\beg{proof} Using the approximation argument in the proof of Theorem \ref{T3.1}, we may and do assume that $D=\R^d$ and  $a$ and $\bar\si$ are bounded and continuous in the second variable and prove the existence and uniqueness of solution up to any time $T'<T$. According to the Yamada-Watanabe principle, we shall verify below the existence of a weak solution and the pathwise uniqueness of the strong solution respectively.

(1) The proof of the existence of a weak solution is standard up to an approximation argument.   Let $\mathbf B(s)= B(r_0+1+s), s\in [-r_0,0],$ where $B(s)$ is a $d$-dimensional Brownian motion.  Define
$$\bar\si_n(t,\phi)= \E \bar\si(t, \phi+n^{-1}\mathbf B),\bar b_n(t,\phi)= \E \bar b(t, \phi+n^{-1}\mathbf B),\ \ n\ge 1.$$ Applying \cite[Corollary 1.3]{BWY} for $\si=\ff 1 nI_{d\times d}, m=0, Z=b=0$ and $T= 1+r_0$, we see that for every $n\ne 1$, $\bar\si_n$ and $\bar b_n$ are Lipschitz continuous in the second variable uniformly in the first variable. Therefore, the equation
$$\d X^{(n)}(t)= \bar b_n(t, X^{(n)}_t)\d t + \bar \si_n(t,X^{(n)}_t)\d B(t),\ \ X_0^{(n)}=X_0$$ has a unique strong solution up to time $T'$: $X^{(n)}\in C([0,T'];\R^d).$  To see that $X^{(n)}$ converges weakly as $n\to\infty$, we take the reference function
$$g_\vv(h):= \sup_{ t\in [0,T)} \sup_{s\in (0, (T-t)\land 1)} \ff{|h(t+s))-h(t)|}{s^\vv}$$ for a fixed number $\vv\in (0,\ff 1 2).$ It is well known that $g_\vv$ is a compact function on $C([0,T'];\R^d)$, i.e. $\{g_\vv\le r\}$ is compact under the uniform norm for any $r>0.$ Since $\bar b_n$ and $\bar\si_n$ are uniformly bounded and $\vv\in (0,\ff 1 2)$, we have
$$\sup_{n\ge 1} \E g_\vv(X^{(n)}) <\infty.$$ Let $\P^{(n)}$ be the distribution of $X^{(n)}$. Then the family $\{\P^{(n)}\}_{n\ge 1}$ is tight, and hence (up to a sub-sequence) converges weakly to a probability measure $\P$ on $\OO:= C([0,T;];\R^d)$. Let $\F_t=\si(\oo\mapsto \oo(s): s\le t)$ for $t\in [0,T'].$ Then the coordinate process
$$X(t)(\oo):= \oo(t),\ \ t\in [0,T'], \oo\in \OO$$ is $\F_t$-adapted. Since $\P^{(n)}$ is the distribution of $X^{(n)}$, we see that
$$M^{(n)}(t):= X(t)-\int_0^t \bar b_n(s, X_s)\d s,\ \ t\in [0,T']$$ is a $\P^{(n)}$-martingale with
$$\<M^{(n)}_i, M_j^{(n)}\>(t) =\sum_{i=1}^m \int_0^t \big\{(\bar\si_n)_{ik}(\bar\si_n)_{jk}\big\}(s,X_s)\d s,\ \ 1\le i,j\le d.$$ Since $\bar\si_n\to\bar\si$ and $\bar b_n\to \bar b$ uniformly and $\P^{(n)}\to \P$ weakly, by letting $n\to \infty$ we conclude that
$$M(t):= X(t)-\int_0^t \bar b(s,X_s)\d s,\ \ s\in [0,T']$$ is a $\P$-martingale with
$$\<M_i, M_j\>(t) =\sum_{i=1}^m \int_0^t \big\{\bar \si_{ik}\bar\si_{jk}\big\}(s,X_s)\d s,\ \ 1\le i,j\le d.$$ According to \cite[Theorem II.7.1]{IW}, this implies   $$M(t)= \int_0^t \bar\si(s,X_s)\d B(s),\ \ t\in [0,T']$$ for some $m$-dimensional Brownian motion $B$ on the filtered probability space $(\OO, \F_t, \P).$
 Therefore, the equation has a weak solution up to time $T'.$

(2) The pathwise uniqueness. Let $X(t)$ and $Y(t)$ for $t\in [0,T']$ be two strong solutions with $X_0=Y_0$. Let $Z=X-Y$ and
$$\tau_n= T'\land   \inf\big\{t\in [0,T):\ |X(t)|+|Y(t)|\ge n\big\}.$$ By the It\^o formula and (\ref{FE2}), we have
\beq\label{ABC}\d |Z(t)|^2\le 2\<(\bar \si(t, X_t)-\bar\si(t,Y_t))
\d B(t), Z_t\> + \|Z_t\|_\infty^2u_n(\|Z_t\|_\infty^2),\ \ t\le
\tau_n.\end{equation} Let
$$\ell_n(t):=\sup_{s\le t\land\tau_n} |Z_s|^2,\ \ t\ge 0.$$
Noting that $su_n(s)$ is increasing in $s$, we have
$$\|Z_t\|_\infty^2u_n(\|Z_t\|_\infty^2)\le \ell_n(t)
u_n(\ell_n(t)),\ \ t\ge 0.$$ So, by (\ref{FF3}), (\ref{ABC}) and
using the Burkholder-Davis-Gundy inequality, there exist constants
$C_1,C_2>0$ such that
\beg{equation*}\beg{split} \E\ell_n(t) &\le \int_0^t \E \ell_n(s) u_n(\ell_n(s))\d s + C_1 \E \bigg(\ell_n(t)\int_0^t\ell_n(s)u_n(\ell_n(s))\d s\bigg)^{1/2}\\
&\le \ff 1 2 \E \ell_n(t) + C_2 \int_0^t \E \ell_n(s) u_n(\ell_n(s))\d s.\end{split}\end{equation*} Since $s\mapsto su_n(s)$ is concave, due to Jensen's inequality this implies that
$$\E \ell_n(t)\le 2 C_2\int_0^t \E \ell_n(s) u_n\big(\E\ell_n(s)\big)\d s.$$ Let $G(s)= \int_1^s\ff 1 {s u_n(s)}\d s,\ s>0,$ and let $G^{-1}$ be the inverse of $G$. Since $\int_0^1\ff 1 {su_n(s)}\d s=\infty$, we have $[-\infty,0]\subset \text{Dom}(G^{-1})$ with $G^{-1}(-\infty)=0$. Then,   by the Bihari's inequality (cf. \cite[Theorem 1.8.2]{Mao}), we obtain
$$\E\ell_n(t) \le G^{-1} \big(G(0)+ G(2C_2 t)\big)=G^{-1}(-\infty)=0.$$ This implies that $X(t)=Y(t)$ for $t\le \tau_n$ for any $n\ge 1$. Since $\bar b$ and $\bar\si$ are bounded, we have $\tau_n\uparrow T'$. Therefore, $X(t)=Y(t)$ for $t\in [0,T'].$
\end{proof}


\begin{thebibliography}{spbasic}

\bibitem{AK} Aida, S. and Kawabi, H. (2001), \emph{Short time asymptotics of certain infinite dimensional
      diffusion process,} ``Stochastic Analysis and Related Topics", VII (Kusadasi,1998); in Progr. Probab. Vol. 48, 77-124.


\bibitem{AZ} Aida, S. and Zhang, T. (2002), \emph{On the small time asymptotics of diffusion processes on path groups,} Pot. Anal. 16, 67-78.

\bibitem{BWY}   Bao, J.,   Wang, F.-Y. and   Yuan, C. (2012), \emph{Derivative formula and Harnack inequality for degenerate functional SDEs,} to appear in Stochastics and Dynamics.

\bibitem{BGL} Bobkov, S. G., Gentil, I. and Ledoux, M. (2001), \emph{Hypercontractivity of Hamilton-Jacobi equations,} J. Math. Pures Appl. 80:7, 669-696.

\bibitem{ERS} Es-Sarhir, A., Von Renesse, M. K. and Scheutzow, M. (2009), \emph{Harnack inequality for
functional SDEs with bounded memory,} Elect. Comm. in Probab. 14, 560-565.


\bibitem{FZ} Fang, S. and Zhang, T. (2005), \emph{A study of a class of stochastic differential equations with non-Lipschitzian coefficients,} Probab. Theory Related Fields, 132, 356-390.

\bibitem{GW}  Gong, F.-Z. and Wang, F.-Y. (2001), \emph{Heat kernel estimates with application to compactness of manifolds,} Quart. J. Math. 52, 171-180.

\bibitem{Anton}  Guo, H.,  Philipowski, R. and   Thalmaier, A. \emph{An entropy formula for the heat equation on manifolds with time-dependent metric, application to ancient solutions,}  preprint.

\bibitem{HS} Hofmanov\'a, M., Seidler, J. (2012), \emph{On weak solutions of stochastic differential equations,} Stoch. Anal. Appl. 30, 100-121.

\bibitem{IW}   N. Ikeda and S. Watanabe,  Stochastic
Differential Equations and Diffusion Processes (Second Edition),
Amsterdam: North-Holland, 1989.

\bibitem{Lan} Lan, G. Q. (2009),  \emph{Pathwise uniquness and non-explosion of SDEs with non-Lipschitzian coefficients,} Acta Math. Sinica (Chinese Ser.) 52, 731--736.

\bibitem{LW}  Liu W. and Wang, F.-Y. (2008), \emph{Harnack inequality and strong Feller property for stochastic fast-diffusion equations,} J. Math. Anal. Appl. 342, 651-662.




\bibitem{RW}  R\"ockner, M. and Wang, F.-Y. (2003), \emph{Harnack and functional inequalities for generalized Mehler semigroups,} J. Funct. Anal. 203, 237-261.

 \bibitem{RW09} R\"ockner, M. and Wang, F.-Y. (2010),  \emph{Log-Harnack  inequality for stochastic differential
  equations in Hilbert spaces and its consequences,}
  Inf. Dim. Anal. Quant. Proba. Relat. Top. 13, 27--37.
\bibitem{SK} Skorokhod, A. V. \emph{On stochastic differential equations,} In: Proceedings of the 6th All-Union Conference on Probability Theory and Mathematical Statistics, GIPNL, Litovskoy SSR, Vil'ngus, 1962, pp. 159--168.


\bibitem{SV} Stroock, D.W. and Varadhan, S.R.S. (1979),    Multidimensional Diffusion Processes, Springer-Verlag, Berlin.

\bibitem{Tan92} Taniguchi,  T. (1992),  \emph{Successive approximations to solutions of stochastic differential equations,} J. Differential Equations 96, 152-169.

\bibitem{Tan10} Taniguchi, T.  (2010),
\emph{The existence and asymptotic behaviour of solutions to non-Lipschitz stochastic functional evolution equations driven by Poisson jumps,} Stochastics 82 (2010), no. 4, 339-363.


\bibitem{TWWY}  Truman, A.,  Wang, F.-Y. ,  Wu, J.-L. and Yang, W.,   \emph{A link of stochastic differential equations to nonlinear parabolic equations,} to appear in Science in China-Mathematics.

\bibitem{Wan1} Wang, F.-Y. (1997), \emph{Logarithmic Sobolev inequalities on noncompact Riemannian manifolds,} Probability Theory Related Fields 109, 417-424.

\bibitem{Wan2} Wang, F.-Y. (1999), \emph{Harnack inequalities for log-Sobolev functions and estimates of log-Sobolev constants,} Ann. Probab. 27, 653-663.

\bibitem{W10} Wang, F.-Y. (2010), \emph{Harnack inequalities on manifolds with boundary and applications,}    J.
Math. Pures Appl.   94, 304--321.

\bibitem{Wan3} Wang, F.-Y. (2011), \emph{Harnack inequality for SDE with multiplicative noise and extension to
               Neumann semigroup on nonconvex manifolds,} Ann. Probab. 39, 1447-1467.

\bibitem{WY10} Wang, F.-Y. and Yuan,  C. (2011),  \emph{Harnack inequalities for functional SDEs with multiplicative noise and applications, }  Stoch. Proc. Appl.    121, 2692--2710.


\bibitem{YW}   Yamada, T. and Watanabe,  S. (1971) \emph{On the uniqueness of
solutions of stochastic differential equations,}  J. Math. Kyoto
Univ.  11, 155--167.






\end{thebibliography}
\end{document}